\newcommand{\doublehat}[1]{\widehat{#1}}
   \def\vec#1{\ensuremath{\mathchoice
                     {\mbox{\boldmath$\displaystyle\mathbf{#1}$}}
                     {\mbox{\boldmath$\textstyle\mathbf{#1}$}}
                     {\mbox{\boldmath$\scriptstyle\mathbf{#1}$}}
                     {\mbox{\boldmath$\scriptscriptstyle\mathbf{#1}$}}}}
   \def\vec#1{\ensuremath{\mathchoice
                     {\mbox{\boldmath$\displaystyle#1$}}
                     {\mbox{\boldmath$\textstyle#1$}}
                     {\mbox{\boldmath$\scriptstyle#1$}}
                     {\mbox{\boldmath$\scriptscriptstyle#1$}}}}
\theoremstyle{plain}
\newtheorem{thm}{Theorem}
\newtheorem{cor}{Corollary}
\newtheorem{lem}{Lemma}
\theoremstyle{remark}
\newtheorem{rem}{Remark}
\renewcommand{\matrix}[1]{\textbf{#1}}
\newcommand{\pderivative}[2]{\frac{\partial #1}{\partial #2}}
\newcommand{\jump}[1]{\left\llbracket #1 \right\rrbracket}
\newcommand{\average}[1]{\left\{\!\!\left\{#1\right\}\!\!\right\}}
\numberwithin{equation}{section}
\begin{document}

\title{An Entropy Stable Finite Volume Scheme for the Equations of Shallow Water Magnetohydrodynamics}
\author{Andrew R.~Winters\footnote{Mathematical Institute, University of Cologne, 50931, Cologne, Germany, awinters@math.uni-koeln.de} \and Gregor J.~Gassner\footnote{Mathematical Institute, University of Cologne, 50931, Cologne, Germany}}
\date{}


\maketitle

\begin{abstract}
In this work, we design an entropy stable, finite volume approximation for the shallow water magnetohydrodynamics (SWMHD) equations. The method is novel as we design an affordable analytical expression of the numerical interface flux function that exactly preserves the entropy, which is also the total energy for the SWMHD equations. To guarantee the discrete conservation of entropy requires a special treatment of a consistent source term for the SWMHD equations. With the goal of solving problems that may develop shocks, we determine a dissipation term to guarantee entropy stability for the numerical scheme. Numerical tests are performed to demonstrate the theoretical findings of entropy conservation and robustness.
\end{abstract}


\section{Introduction}\label{intro}

Systems of conservation laws that model a physical system or process often possess an additional conserved quantity, the entropy, which is conserved for smooth solutions but increases (or decreases according to the sign convention adopted) in the presence of shocks. A method is said to be entropy conservative if the local changes of entropy are the same as predicted by the entropy conservation law. The approximation is said to be entropy stable if it produces more entropy than an entropy conservative scheme.

We introduce the important distinction between entropy conservation and stability because there is a problem with entropy conservative formulations. They may suffer breakdown if used without dissipation to capture shocks. Physically, entropy must be dissipated at a shock. However, an isentropic algorithm does not allow the capture of this physical process, which generates large amplitude oscillations around the shock \cite{carpenter_esdg}. A more dire issue is that entropy conservative formulations can not converge to the weak solution as there is no mechanism to admit the dissipation physically required at the shock.

Tadmor \cite{tadmor1984,tadmor1987} was the first to introduce the idea of entropy conservation to design stable numerical approximations of nonlinear hyperbolic conservation laws. At a semi-discrete level the principle is that the discrete flux function satisfies discrete conservation laws of the conservative variables as well as the scalar conservation law for entropy in a discrete sense. Tadmor's flux function performs well for smooth data but can be made stable for shock problems \cite{tadmor2003}. But, Tadmor's flux function, which involves an integral in phase space, is numerically expensive. For large scale simulations we want a more practical, computationally tractable, entropy conserving flux.

Thus, in this paper, it is our goal to develop affordable, entropy stable methods for the shallow water magnetohydrodynamic (SWMHD) model. There is recent work for optimal, entropy stable approximations for the Euler equations from Ismail and Roe \cite{ismail2009} which was extended to arbitrary order with a discontinuous Galerkin (DG) spectral element formulation by Carpenter et al. \cite{carpenter_esdg}. Also, there is recent work on entropy-stable, high-order, and well-balanced approximations for the shallow water equations \cite{gassner_skew_burgers,gassner2014,winters2014}.

The remainder of this paper is organized as follows: Sec. \ref{GoverningEqns} provides an introduction to the SWMHD equations. In Sec. \ref{FVDiscretization} we briefly describe the finite volume discretization used. Sec. \ref{EntropyAnalysis} defines the necessary variables and analytical tools to discuss the entropy in a mathematically rigorous way. We derive an entropy conserving numerical flux in Sec. \ref{Sec:ConservingFlux} and discuss the stabilized flux in Sec. \ref{Sec:StableFlux}. Numerical results are presented in Sec. \ref{NumericalResults}, where we verify theoretical predictions. Sec. \ref{conclusion} presents concluding remarks. {\color{black}{Finally, in the Appendix we provide entropy stable fluxes that can be used for two dimensional computations.}}

\section{The Shallow Water Magnetohydrodynamic Model}\label{GoverningEqns}

The shallow water magnetohydrodynamics (SWMHD) equations were first proposed by Gilman as a model for phenomena in the solar tachocline \cite{gilman2000}, which is the thin layer in a star between the outer turbulent convection zone, and the quiescent interior where heat transfer is predominantly radiative. {\color{black}{The tachocline has many interesting physical applications such as the formation of sunspots  \cite{desterck2001}. Tachoclines can also serve as a useful model of solar dynamo action \cite{dikpati2001a,dikpati2001,gilman2000}.}}

{\color{black}{As was first shown by Gilman \cite{gilman2000} with complete details presented by Rossmanith \cite{rossmanith2002} the SWMHD equations are derived from the ideal MHD equations by integrating in the $z-$direction under the assumption of incompressibility, a two dimensional variation of the flow variables, and magnetohydrostatic equilibrium in the $z-$direction:
\begin{equation}
\begin{aligned}
\rho &\equiv constant,\\
\pderivative{}{z}\left(p + \frac{\|\vec{B}\|^2}{2}\right) &= \rho g,\\
p + \frac{\|\vec{B}\|^2}{2} &= \rho g z,\\
\int_0^h \left(p + \frac{\|\vec{B}\|^2}{2}\right)\,dz &= \rho g \frac{h^2}{2}.
\end{aligned}
\end{equation}
Under these assumptions and the divergence-free constraint $\nabla\cdot(h\vec{B})$ the SWMHD equations can be cast into the form of a system of conservation laws \cite{desterck2001,rossmanith2002}.}}
The one dimensional SWMHD equations comprise a hyperbolic system written in the conservative form
\begin{equation}\label{SWMHD1D}
\begin{aligned}
\pderivative{\vec{u}}{t} + \pderivative{\vec{f}}{x} = 
\begin{bmatrix}
h\\
hv_1\\
hv_2\\
hB_1\\
hB_2\\
\end{bmatrix}_t + \begin{bmatrix}
hv_1 \\
hv_1^2 +\frac{1}{2}gh^2 - hB_1^2 \\
hv_1v_2-hB_1B_2 \\
0 \\
hv_1B_2 - hv_2B_1\\
\end{bmatrix}_x
&=
\begin{bmatrix}
0 \\
0 \\
0 \\
0 \\
0 \\
\end{bmatrix},\\\quad \pderivative{(hB_1)}{x} &= 0,
\end{aligned}
\end{equation}
where the final equation is the one dimensional involution condition on the magnetic field $\vec{B}$. The variables $v_1$, $v_2$, and $\vec{B}$ in \eqref{SWMHD1D} are the horizontal components of the fluid velocity and magnetic field, $h$ is the fluid layer depth, and $g$ is the constant gravitational acceleration.

Hyperbolic systems, like the SWMHD equations, are typically solved numerically with Godunov-type methods \cite{leveque2002}. These methods require the solution of a Riemann problem at element interfaces. Unfortunately, the one-dimensional SWMHD equations are degenerate \cite{desterck2001}, just like the ideal MHD equations \cite{powell1994,roe1996}, due to the involution, $\nabla\cdot(h\vec{B}) = 0$. For example, if we consider the one dimensional problem we see from \eqref{SWMHD1D} that $\partial_t(hB_1) = 0$. Thus, there is no propagating Riemann wave associated with the conservative variable $hB_1$. To alleviate such degeneracies we are interested in relaxing the involution $\nabla\cdot(h\vec{B}) = 0$ such that a conventional Riemann solver based method may be used for the approximation of the SWMHD system. 

To accomplish this we weaken the divergence condition to require that $\nabla\cdot(hB)\approx 0,$ i.e., the divergence condition is to be maintained up to finite precision error.
Dellar \cite{dellar2002} showed that for the relaxed assumption of the divergence condition we obtain a forced version of the SWMHD model 
\begin{equation}\label{SWMHD1DForced}
\begin{aligned}
\pderivative{\vec{u}}{t} + \pderivative{\vec{f}}{x} = \begin{bmatrix}
h\\
hv_1\\
hv_2\\
hB_1\\
hB_2\\
\end{bmatrix}_t + \begin{bmatrix}
hv_1 \\
hv_1^2 +\frac{1}{2}gh^2 - hB_1^2 \\
hv_1v_2-hB_1B_2 \\
0 \\
hv_1B_2 - hv_2B_1\\
\end{bmatrix}_x
&=
- \pderivative{(hB_1)}{x}\begin{bmatrix}
0 \\
0 \\
0 \\
v_1 \\
v_2 \\
\end{bmatrix} = \vec{s},
\end{aligned}
\end{equation}
where the source term is proportional to the divergence condition. Importantly, the source term in \eqref{SWMHD1DForced} alters the system to make $hB_1$ an advected scalar while maintaining the conservation of mass and momentum. The forced equations \eqref{SWMHD1DForced} can be derived from a Hamiltonian approach \cite{dellar2002}, so the system \eqref{SWMHD1DForced} conserves the total energy. The conservation of total energy is important when we develop the entropy conserving flux in Sec. \ref{Sec:ConservingFlux}. Also, the source term serves to restore Galilean invariance when $\nabla\cdot(h\vec{B})\ne 0$ \cite{dellar2002}. Finally, the source term in \eqref{SWMHD1DForced} is analogous to the Janhunen source term for the ideal MHD equations \cite{janhunen2000}.

It is important to note that the source term in \eqref{SWMHD1DForced} preserves the conservation of the momentum and energy, while enforcing the involution condition on $h\vec{B}$. Preserving the conservative properties of the fluid equations removes a significant drawback of the other common proposed Powell type source term for the SWMHD equations. The Powell source term renders the equations of momentum and energy for SWMHD non-conservative \cite{dellar2002}. However, according to the Lax-Wendroff theorem \cite{lax1960} only conservative schemes can be expected to compute the correct jump conditions and propagation speed for a discontinuous solution. Difficulties of the non-conservative formulation obtaining the correct weak solution has been documented in the literature for the ideal MHD equations \cite{toth2000}. 

\section{Finite Volume Discretization}\label{FVDiscretization}

The finite volume (FV) method is a discretization technique for partial differential equations especially useful for the approximation of systems of conservations laws. The finite volume method is designed to approximate conservation laws in their original form, e.g.,
\begin{equation}
\int_V\vec{u}_t\,dx + \int_{\partial V} \vec{f}\cdot\hat{\vec{n}}\,dS = 0.
\end{equation}
For instance, in one spatial dimension we break the interval into non-overlapping intervals (the ``volumes'')
\begin{equation}
V_i = \left[x_{i-\tfrac{1}{2}},x_{i+\tfrac{1}{2}}\right],
\end{equation}
and the integral equation of a balance law, with a source term, becomes
\begin{equation}\label{FVSource}
\frac{d}{dt}\int_{x_{i-{1}/{2}}}^{x_{i+{1}/{2}}} \vec{u}\,dx + \vec{f}^*\left(x_{i+{1}/{2}}\right) - \vec{f}^*\left(x_{i-{1}/{2}}\right) = \int_{x_{i-{1}/{2}}}^{x_{i+{1}/{2}}} \vec{s}\,dx
\end{equation}
At this point, we make approximations. A common approximation is to assume that the solution and the source term are constant within the volume. Then we determine, for example, what is analogous to a midpoint quadrature approximation of the solution integral
\begin{equation}
\int_{x_{i-{1}/{2}}}^{x_{i+{1}/{2}}} \vec{u}\,dx \approx \int_{x_{i-{1}/{2}}}^{x_{i+{1}/{2}}} \vec{u}_i\,dx = \vec{u}_i\Delta x_i.
\end{equation}
Note that the solution is typically discontinuous at the boundaries of the volumes. To resolve this, we introduce the idea of a ``numerical flux,'' $\vec{f}^*(\vec{u}^L,\vec{u}^R)$, often derived from the (approximate) solution of a Riemann problem. That is, $\vec{f}^*$ is a function that takes the two states of the solution at an element interface and returns a single flux value. For consistency, we require that 
\begin{equation}\label{consistency}
\vec{f}^*(\vec{u},\vec{u}) = \vec{f},
\end{equation}
that is, the numerical flux is equivalent to the physical flux if the states on each side of the interface are identical. A significant portion of this paper is devoted to the derivation of a numerical flux that conserves the discrete entropy of the system for the shallow water MHD equations. So we defer the discussion of the numerical flux to Sec. \ref{EntropyFlux}. 

We must also address how to discretize the source term $\vec{s}$ in \eqref{FVSource}. There is a significant amount of freedom in the source term discretization. But, previous work by Fjordholm et al. \cite{fjordholm2011} demonstrated that designing entropy stable methods for the shallow water equations with discontinuous bottom topography required a special treatment of the source term. In the later derivations a consistent source term discretization necessary for entropy conservation will reveal itself. Thus, we also defer the discussion of the discrete treatment of the source term to Sec. \ref{Sec:ConservingFlux}.

\section{Entropy Analysis}\label{EntropyAnalysis}

In this section we define the entropy variables and entropy Jacobian necessary to develop an entropy stable approximation for the SWMHD equations. We note that one can find a fully general and detailed description of entropy stability theory in, for example, \cite{barth99,fjordholm2012}. 

For the SWMHD equations the total energy acts as an entropy function, \cite{dellar2002},
\begin{equation}\label{entropyFunction}
U\left(\vec{u}\right) = \frac{1}{2}\left(gh^2+hv_1^2+hv_2^2+hB_1^2+hB_2^2\right),
\end{equation}
where $\vec{u}$ is the vector of conserved variables. From the total entropy \eqref{entropyFunction} we define the set of entropy variables
\begin{equation}\label{entropyVariables}
U_{\vec{u}} := \vec{q} = \left(gh-\frac{1}{2}\left\{v_1^2+v_2^2+B_1^2+B_2^2\right\}\,,\,v_1\,,\,v_2\,,\,B_1\,,\,B_2\right)^T.
\end{equation}
The entropy variables \eqref{entropyVariables} are equipped with the symmetric positive definite (s.p.d) Jacobian matrices 
\begin{equation}\label{entropyJacobianInverse}
\matrix{H}^{-1} =\vec{q}_{\vec{u}} = \frac{1}{h}\begin{bmatrix} c^2+v_1^2+v_2^2+B_1^2+B_2^2 & -v_1 &  -v_2 & -B_1 & -B_2 \\[0.1cm]
-v_1 & 1 & 0 & 0 & 0 \\[0.1cm]
-v_2 & 0 & 1 & 0 & 0 \\[0.1cm]
-B_1 & 0 & 0 & 1 & 0 \\[0.1cm]
-B_2 & 0 & 0 & 0 & 1
\end{bmatrix},
\end{equation}
and
\begin{equation}\label{entropyJacobian}
\matrix{H} =\vec{u}_{\vec{q}} = \frac{1}{g}
\begin{bmatrix} 
1 & v_1 & v_2 & B_1 & B_2 \\[0.1cm]
v_1 & v_1^2 + c^2 & v_1 v_2 & v_1 B_1 & v_1 B_2 \\[0.1cm]
v_2 & v_1 v_2 & v_2^2 + c^2 & v_2 B_1 & v_2 B_2 \\[0.1cm]
B_1 & v_1 B_1 & v_2 B_1 & B_1^2 + c^2 & B_1 B_2 \\[0.1cm]
B_2 & v_1 B_2 & v_2 B_2 & B_1 B_2 & B_2^2 + c^2
\end{bmatrix},
\end{equation}
where $c^2 = gh$ is the standard wave speed for the shallow water equations. The fact that the matrices $\matrix{H}$ and $\matrix{H}^{-1}$ are s.p.d. is a direct consequence that the entropy function \eqref{entropyFunction} is a strongly convex, injective mapping \cite{dellar2002}. For the derivations that follow we also require the flux Jacobian in the $x-$direction:
\begin{equation}\label{fluxJacobian}
\matrix{A} =\vec{f}_{\vec{u}} = \begin{bmatrix} 0 & 1 & 0 & 0 & 0  \\[0.1cm]
c^2 - v_1^2 + B_1^2 & 2v_1 & 0 & -2B_1 & 0 \\[0.1cm]
-v_1v_2+B_1B_2 & v_2 & v_1 & -B_2 & -B_1 \\[0.1cm]
0 & 0 & 0 & 0 & 0 \\[0.1cm]
v_2B_1 - v_1B_2 & B_2 & -B_1 & -v_2 & v_1
\end{bmatrix}.
\end{equation}
Because it will be of use in later derivations we also compute the entropy potential 
\begin{equation}\label{entropyPotential}
\psi = \vec{q}\cdot\vec{f} - F =  \frac{1}{2}gh^2v_1 - hB_1\left(v_1B_1 + v_2B_2\right),
\end{equation}
where $F$ is the entropy flux derived from the identity $U_{\vec{u}}\matrix{A} = F_{\vec{u}}$, \cite{fjordholm2011},
\begin{equation} 
F = gh^2v_1 + \frac{1}{2}\left(hv_1^3 + hv_1v_2^2 + hv_1B_2^2 - hv_1B_1^2\right) - hv_2B_1B_2.
\end{equation}
\subsection{Discrete Entropy Conservation for the 1D SWMHD Equations}\label{Sec:DiscreteEntropy}
We introduce the concept of discrete entropy conservation. Let's assume that we have two adjacent states $(L,R)$ with cell areas $(\Delta x_L,\Delta x_R)$. We discretize the SWMHD equations semi-discretely and examine the approximation at the $i+\tfrac{1}{2}$ interface. We suppress the interface index unless it is necessary for clarity.
\begin{equation}\label{FVupdate}
\begin{aligned}
\Delta x_L\pderivative{\vec{u}_L}{t} &= \vec{f}_L - \vec{f}^*  + \Delta x_L{\vec{s}}_{i+\tfrac{1}{2}},\\
\Delta x_R\pderivative{\vec{u}_R}{t} &= \vec{f}^*-\vec{f}_R + \Delta x_R{\vec{s}}_{i+\tfrac{1}{2}}.
\end{aligned}
\end{equation}
We interpret the update \eqref{FVupdate} as a finite volume scheme where we have left and right cell-averaged values separated by a common flux interface.

We premultiply the expressions \eqref{FVupdate} by the entropy variables to convert to entropy space. From the chain rule we know that $U_t = \vec{q}^T\vec{u}_t$, hence a semi-discrete entropy update is
\begin{equation}\label{EntropyUpdate}
\begin{aligned}
\Delta x_L\pderivative{U_L}{t} &= \vec{q}_L^T\left(\vec{f}_L - \vec{f}^*  + \Delta x_L{\vec{s}}_{i+\tfrac{1}{2}}\right), \\
\Delta x_R\pderivative{U_R}{t} &= \vec{q}_R^T\left(\vec{f}^*-\vec{f}_R + \Delta x_R{\vec{s}}_{i+\tfrac{1}{2}}\right).
\end{aligned}
\end{equation}
If we denote the jump in a state as $\jump{\cdot} = (\cdot)_R - (\cdot)_L$ and the average of a state as $\average{\cdot} = ((\cdot)_R + (\cdot)_L)/2$, then we find the total element update to the entropy is
\begin{equation}\label{TotalUpdate}
\pderivative{}{t}(\Delta x_L U_L + \Delta x_R U_R) = -\jump{\vec{q}\cdot\vec{f}} + \jump{\vec{q}}^T\vec{f}^*  + 2\average{\Delta x\vec{q}}{\vec{s}}_{i+\tfrac{1}{2}}.
\end{equation}
We want the discrete entropy update to satisfy the discrete entropy conservation law. To achieve this, we require
\begin{equation}\label{entropyConservationCondition1}
2\average{\Delta x\vec{q}}^T{\vec{s}}_{i+\tfrac{1}{2}}-\jump{\vec{q}\cdot\vec{f}} + \jump{\vec{q}}^T\vec{f}^* = -\jump{F}.
\end{equation}
We combine the known entropy potential $\psi$ in \eqref{entropyPotential} and the linearity of the jump operator to rewrite the entropy conservation condition \eqref{entropyConservationCondition1} as 
\begin{equation}\label{entropyConservationCondition2}
\jump{\vec{q}}^T\vec{f}^* =  \frac{1}{2}\jump{gh^2v_1} - \jump{hB_1\left(v_1B_1 + v_2B_2\right)} - 2\average{\Delta x\vec{q}}^T{\vec{s}}_{i+\tfrac{1}{2}}.
\end{equation}
We denote the constraint \eqref{entropyConservationCondition2} as the discrete entropy conserving condition. However, this is a single condition on the vector $\vec{f}^*$, so there are many potential solutions for the entropy conserving flux. However, we have the additional requirement that the numerical flux must be consistent \eqref{consistency}. We develop an affordable entropy conserving flux function in Sec. \ref{EntropyFlux}. We also defer the discretization of the source term ${\vec{s}_{i+1/2}}$ to to the next section because special care must be taken to guarantee that \eqref{entropyConservationCondition2} is satisfied.

\section{Derivation of an Entropy Stable Numerical Flux}\label{EntropyFlux}

With the necessary entropy variable and Jacobian definitions as well as the formulation of the discrete entropy conserving condition \eqref{entropyConservationCondition2} we are ready to derive an affordable entropy conserving numerical flux in Sec. \ref{Sec:ConservingFlux}. As we previously noted, entropy conserving methods may suffer breakdown in the presence of shocks \cite{carpenter_esdg}. Thus, in Sec. \ref{Sec:StableFlux}, we will design an entropy stable Riemann solver that uses the entropy conserving flux as a base and incorporates a dissipation term required for stability.

\subsection{Entropy Conserving Numerical Flux for 1D SWMHD}\label{Sec:ConservingFlux}

In this section we define an affordable, entropy conserving numerical flux for the one dimensional SWMHD equations.

\begin{thm}(Entropy Conserving Numerical Flux)
If we discretize the source term in the finite volume method to contribute to each element as 
\begin{equation}
\vec{s}_i = \frac{1}{2}\left(\vec{s}_{i+\tfrac{1}{2}} + \vec{s}_{i-\tfrac{1}{2}}\right) = -\frac{1}{2}\left(
\jump{hB_1}_{i+\tfrac{1}{2}}
\begin{bmatrix}
0\\
0\\
0\\
\frac{\average{v_1B_1}}{\average{\Delta x B_1}}\\[0.1cm]
\frac{\average{v_2B_2}}{\average{\Delta x B_2}}
\end{bmatrix}_{i+\tfrac{1}{2}}
+
\jump{hB_1}_{i-\tfrac{1}{2}}
\begin{bmatrix}
0\\
0\\
0\\
\frac{\average{v_1B_1}}{\average{\Delta x B_1}}\\[0.1cm]
\frac{\average{v_2B_2}}{\average{\Delta x B_2}}
\end{bmatrix}_{i-\tfrac{1}{2}}
\right),
\end{equation}
then we can determine a discrete, entropy conservative flux to be
\begin{equation}
\vec{f}^{*,ec} = \begin{bmatrix}
\average{h}\average{v_1} \\[0.1cm]
\average{h}\average{v_1}^2 + \frac{1}{2}g\average{h^2} - \average{hB_1}\average{B_1}\\[0.1cm]
\average{h}\average{v_1}\average{v_2} - \average{hB_1}\average{B_2} \\[0.1cm]
\average{h}\average{v_1}\average{B_1}-\average{hB_1}\average{v_1} \\[0.1cm]
\average{h}\average{v_1}\average{B_2}-\average{hB_1}\average{v_2}
\end{bmatrix}.
\end{equation}
\end{thm}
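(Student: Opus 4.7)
My plan is to verify the discrete entropy conservation condition \eqref{entropyConservationCondition2} directly by substituting the proposed flux and source term and reducing the result to an identity via the product rules $\jump{ab} = \average{a}\jump{b} + \jump{a}\average{b}$ and $\jump{a^2} = 2\average{a}\jump{a}$. I would first check consistency of $\vec{f}^{*,ec}$: setting $L=R$ in each component collapses every $\average{\cdot}$ to the shared state, recovering $h(v_1B_2 - v_2B_1)$ in the last slot and each of the other physical flux components, so $\vec{f}^{*,ec}(\vec{u},\vec{u}) = \vec{f}$ as required.

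For the source side I would exploit the algebraic design of $\vec{s}_{i+\tfrac{1}{2}}$. Since the entropy variables are $\vec{q} = (q_1,v_1,v_2,B_1,B_2)^T$, the fourth and fifth entries of $\average{\Delta x\vec{q}}$ are exactly $\average{\Delta x B_1}$ and $\average{\Delta x B_2}$, which are the denominators appearing in $\vec{s}_{i+\tfrac{1}{2}}$. Forming $\average{\Delta x\vec{q}}^T\vec{s}_{i+\tfrac{1}{2}}$ therefore cancels those denominators exactly, and $-2\average{\Delta x\vec{q}}^T\vec{s}_{i+\tfrac{1}{2}}$ collapses to a multiple of $\jump{hB_1}\bigl(\average{v_1B_1}+\average{v_2B_2}\bigr)$. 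This is the sole reason for choosing those specific ratios, and it is the only place in the argument where the particular form of the source term is used.

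The main computation is expanding $\jump{\vec{q}}^T\vec{f}^{*,ec}$ using $\jump{q_1} = g\jump{h} - \average{v_1}\jump{v_1} - \average{v_2}\jump{v_2} - \average{B_1}\jump{B_1} - \average{B_2}\jump{B_2}$. I would collect the resulting terms into four families: (i) pure gravity terms from $g\jump{h}\average{h}\average{v_1}$ inside $\jump{q_1}f^*_1$ combined with $\tfrac{1}{2}g\jump{v_1}\average{h^2}$ from $\jump{v_1}f^*_2$, which via $\jump{h^2}=2\average{h}\jump{h}$ reassemble into $\tfrac{1}{2}\jump{gh^2v_1}$; (ii) velocity-only terms proportional to $\average{h}\average{v_1}$ coming from $\jump{q_1}f^*_1$ and from the $\average{h}\average{v_1}^2,\ \average{h}\average{v_1}\average{v_2}$ pieces of $f^*_2,f^*_3$, which cancel identically; (iii) magnetic cross terms proportional to $\average{h}\average{v_1}\average{B_k}$ from $\jump{q_1}f^*_1$ and $\jump{B_k}f^*_{k+3}$, which also cancel; and (iv) the residual $\average{hB_1}$ terms, which regroup via the product rule for jumps into $-\average{hB_1}\jump{v_1B_1+v_2B_2}$.

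Finally I would match the right-hand side of \eqref{entropyConservationCondition2}. Writing $\jump{hB_1\cdot(v_1B_1+v_2B_2)} = \average{hB_1}\jump{v_1B_1+v_2B_2} + \jump{hB_1}\average{v_1B_1+v_2B_2}$, the $-\average{hB_1}\jump{\cdot}$ piece is exactly family (iv), while the leftover $-\jump{hB_1}\average{\cdot}$ piece is absorbed by the simplified source contribution derived above; family (i) furnishes $\tfrac{1}{2}\jump{gh^2v_1}$ directly. The main obstacle is disciplined bookkeeping: one must carefully distinguish $\average{hB_1}$ (average of a product) from $\average{h}\average{B_1}$ (product of averages), track which of these appears in each entry of $\vec{f}^{*,ec}$, and confirm that the cross cancellations in families (ii) and (iii) go through cleanly so that only the designed terms survive.
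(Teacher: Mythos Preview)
Your proposal is correct and uses exactly the same algebraic ingredients as the paper: the product rules for $\jump{\cdot}$, the expansion of $\jump{q_1}$ into primitive-variable jumps, and the key observation that $\average{\Delta x B_k}$ in $\average{\Delta x\vec q}$ cancels the denominators in $\vec s_{i+1/2}$ so that the source contribution reduces to $\jump{hB_1}\bigl(\average{v_1B_1}+\average{v_2B_2}\bigr)$. The only difference is organizational---you verify the given flux by grouping terms into your four families, whereas the paper runs the computation constructively, matching coefficients of $\jump{h},\jump{v_1},\jump{v_2},\jump{B_1},\jump{B_2}$ to \emph{solve} for the $f_k^{*,ec}$---but the underlying identities and cancellations are identical.
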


\begin{proof}
To derive an affordable entropy conservative flux for the one dimensional SWMHD equations we first expand the discrete entropy conserving condition \eqref{entropyConservationCondition2} componentwise to find
\begin{equation}\label{entropyCondition}
\begin{aligned}
\jump{gh -\frac{1}{2}\left\{v_1^2+v_2^2+B_1^2+B_2^2\right\}}f_1^* &+ \jump{v_1}f_2^* + \jump{v_2}f_3^* + \jump{B_1}f_4^* + \jump{B_2}f_5^* = \\ & \frac{1}{2}\jump{gh^2v_1} - \jump{hB_1\left(v_1B_1 + v_2B_2\right)} - 2\average{\Delta x\vec{q}}^T{\vec{s}}_{i+\tfrac{1}{2}},
\end{aligned}
\end{equation}
To determine the unknown flux components of $\vec{f}^*$ we use \eqref{entropyCondition} to obtain a system of equations for each of the linear jump terms. For this expansion step we make repeated use of the linear jump operator properties:
\begin{equation}
\jump{ab}  = \average{a}\jump{b} + \average{b}\jump{a},\qquad \jump{a^2} = 2\average{a}\jump{a}.
\end{equation}
We expand the terms of \eqref{entropyCondition} to linear jump terms of the primitive variables $h$, $v_1$, $v_2$, $B_1$, and $B_2$. This is straightforward in all but the last two terms on the right hand side
\begin{equation}\label{eq:entropyConstraint2}
\begin{aligned}
&g\jump{h}f_1^*+\left(f_2^*-\average{v_1}f_1^*\right)\jump{v_1}+\left(f_3^*-\average{v_2}f_1^*\right)\jump{v_2} + \left(f_4^*-\average{B_1}f_1^*\right)\jump{B_1}+\left(f_5^*-\average{B_2}f_1^*\right)\jump{B_2}\\
&= \frac{1}{2}g\average{h^2}\jump{v_1} + g\average{v_1}\average{h}\jump{h} - \jump{hB_1(v_1B_1+v_2B_2)} - 2\average{\Delta x\vec{q}}^T{\vec{s}}_{i+\tfrac{1}{2}}.
\end{aligned}
\end{equation}

Care must be taken when we expand the the second term on the right hand side of the entropy condition \eqref{eq:entropyConstraint2}. If we naively split the jump terms all the way to individual linear jump terms it will introduce averages of the magnetic field variables $\vec{B}$ on the jump in fluid height $h$. However, the continuity equation in the SWMHD model contains no influence from the magnetic field. Therefore, it would be impossible for the resulting numerical flux to be consistent with the physical flux. 

To remove this inconsistency we split the magnetic field jumps on the right hand side of \eqref{eq:entropyConstraint2} in such a way that cancellation with the source term is possible. Thus, rather than fully expanding each jump term down to the primitive variable level, it is useful to leave the product $hB_1$ term as a jump. This choice is motivated by the structure of the source term because 
\begin{equation}
    \pderivative{}{x}\!\left(hB_1\right) \approx \frac{\jump{hB_1}}{\Delta x}. 
\end{equation}
We expand the second term on the right hand side of \eqref{eq:entropyConstraint2} as follows:
\begin{equation}\label{RHSTerm2}
\begin{aligned}
\jump{hB_1(v_1B_1+v_2B_2)} &= \jump{hv_1B_1^2} + \jump{hv_2B_1B_2}, \\[0.1cm]
&= \average{hB_1}\jump{v_1B_1} + \average{v_1B_1}\jump{hB_1} + \average{hB_1}\jump{v_2B_2} + \average{v_2B_2}\jump{hB_1},\\[0.1cm]
&= \big(\average{v_1B_1} + \average{v_2B_2}\big)\jump{hB_1} + \average{hB_1}\jump{v_1B_1} + \average{hB_1}\jump{v_2B_2},\\[0.1cm]
&= \big(\average{v_1B_1} + \average{v_2B_2}\big)\jump{hB_1} + \average{hB_1}\big(\average{v_1}\jump{B_1} + \average{B_1}\jump{v_1}\big) \\
&\qquad\qquad+ \average{hB_1}\big(\average{v_2}\jump{B_2} + \average{B_2}\jump{v_2}\big).\\[0.1cm]
\end{aligned}
\end{equation}
We substitute \eqref{RHSTerm2} into the expanded entropy constraint \eqref{eq:entropyConstraint2} to obtain
\begin{equation}\label{eq:entropyConstraint3}
\begin{aligned}
&g\jump{h}f_1^*+\left(f_2^*-\average{v_1}f_1^*\right)\jump{v_1}+\left(f_3^*-\average{v_2}f_1^*\right)\jump{v_2} + \left(f_4^*-\average{B_1}f_1^*\right)\jump{B_1}+\left(f_5^*-\average{B_2}f_1^*\right)\jump{B_2}\\
&= \frac{1}{2}g\average{h^2}\jump{v_1} + g\average{v_1}\average{h}\jump{h} - \average{hB_1}\big(\average{v_1}\jump{B_1} + \average{B_1}\jump{v_1}+\average{v_2}\jump{B_2} + \average{B_2}\jump{v_2}\big) \\ &\qquad\qquad- \big(\average{v_1B_1} + \average{v_2B_2}\big)\jump{hB_1}- 2\average{\Delta x\vec{q}}^T{\vec{s}}_{i+\tfrac{1}{2}}.
\end{aligned}
\end{equation}

From the structure of the expanded entropy conserving constraint \eqref{eq:entropyConstraint3} we are prepared to select a discretization of the source term at the interface. The goal is to cancel the jump in $hB_1$ term. We choose that the source term discretization on cell $i$ will contribute to the interfaces $i+\tfrac{1}{2}$ and $i-\tfrac{1}{2}$. In exact arithmetic the dot product of the entropy variables with the source term is
\begin{equation}\label{EntropyTimesSource}
-\pderivative{}{x}(hB_1)\left(v_1B_1 + v_2B_2\right).
\end{equation}
From \eqref{EntropyTimesSource} we see that it will be possible to create a consistent discretization to cancel the $\jump{hB_1}$ term that remains in \eqref{eq:entropyConstraint3}. To do so we require the source term discretization
\begin{equation}\label{SourceTermDisc}
\vec{s}_i = \frac{1}{2}\left(\vec{s}_{i+\tfrac{1}{2}} + \vec{s}_{i-\tfrac{1}{2}}\right) = -\frac{1}{2}\left(
\jump{hB_1}_{i+\tfrac{1}{2}}
\begin{bmatrix}
0\\
0\\
0\\
\frac{\average{v_1B_1}}{\average{\Delta x B_1}}\\[0.1cm]
\frac{\average{v_2B_2}}{\average{\Delta x B_2}}
\end{bmatrix}_{i+\tfrac{1}{2}}
+
\jump{hB_1}_{i-\tfrac{1}{2}}
\begin{bmatrix}
0\\
0\\
0\\
\frac{\average{v_1B_1}}{\average{\Delta x B_1}}\\[0.1cm]
\frac{\average{v_2B_2}}{\average{\Delta x B_2}}
\end{bmatrix}_{i-\tfrac{1}{2}}
\right).
\end{equation}
The source term discretization \eqref{SourceTermDisc} is consistent to the source term in  \eqref{SWMHD1DForced}. For example, on a regular grid where $\Delta x_L = \Delta x_R$ we have for $\vec{s}_{i+1/2}$
\begin{equation}
-\jump{hB_1}_{i+\tfrac{1}{2}}\begin{bmatrix}
0\\
0\\
0\\
\frac{\average{v_1B_1}}{\average{\Delta x B_1}}\\[0.1cm]
\frac{\average{v_2B_2}}{\average{\Delta x B_2}}
\end{bmatrix}_{i+\tfrac{1}{2}}
=
-\frac{\jump{hB_1}_{i+\tfrac{1}{2}}}{\Delta x}
\begin{bmatrix}
0\\
0\\
0\\
\frac{\average{v_1B_1}}{\average{B_1}}\\[0.1cm]
\frac{\average{v_2B_2}}{\average{B_2}}
\end{bmatrix}_{i+\tfrac{1}{2}},
\end{equation}
where consistency is easily checked. We note that there may be computational issues if either $B_1$ or $B_2$ is zero.  However, if $B_2=0$ (for example), then the contribution to the entropy conservation constraint is also zero. Now it is possible to use the source term discretization \eqref{SourceTermDisc} to see the explicit cancellation of the problematic $\jump{hB_1}$ term at the $i+\tfrac{1}{2}$ interface:
\begin{equation}
\begin{aligned}
- \big(\average{v_1B_1} + \average{v_2B_2}\big)\jump{hB_1} - 2\average{\Delta x\vec{q}}^T{\vec{s}}_{i+\tfrac{1}{2}}&= - \big(\average{v_1B_1} + \average{v_2B_2}\big)\jump{hB_1}  \\ &\quad\quad\quad- 2\average{\Delta x\vec{q}}^T\left(-\frac{\jump{hB_1}}{2}
\begin{bmatrix}
0\\
0\\
0\\
\frac{\average{v_1B_1}}{\average{\Delta xB_1}}\\[0.1cm]
\frac{\average{v_2B_2}}{\average{\Delta xB_2}}
\end{bmatrix}\right),\\[0.1cm]
&= - \big(\average{v_1B_1} + \average{v_2B_2}\big)\jump{hB_1} \\ &\quad+ \big(\average{v_1B_1} + \average{v_2B_2}\big)\jump{hB_1}, \\[0.1cm]
&= 0.
\end{aligned}
\end{equation}

With the remaining $\jump{hB_1}$ term is removed from \eqref{eq:entropyConstraint3} we are left with the constraint on the numerical flux that only depends on the linear jump in each primitive variable
\begin{equation}\label{eq:entropyConstraint4}
\begin{aligned}
g\jump{h}f_1^*&+\left(f_2^*-\average{v_1}f_1^*\right)\jump{v_1}+\left(f_3^*-\average{v_2}f_1^*\right)\jump{v_2} + \left(f_4^*-\average{B_1}f_1^*\right)\jump{B_1}+\left(f_5^*-\average{B_2}f_1^*\right)\jump{B_2}\\
&= \frac{1}{2}g\average{h^2}\jump{v_1} + g\average{v_1}\average{h}\jump{h} - \average{hB_1}\big(\average{v_1}\jump{B_1} + \average{B_1}\jump{v_1}+\average{v_2}\jump{B_2} + \average{B_2}\jump{v_2}\big).
\end{aligned}
\end{equation}
We collect the like terms and determine a set of equations for the unknown components of the entropy conserving numerical flux $\vec{f}^{*,ec}$:
\begin{equation}\label{eq:fluxAlmostThere}
\begin{aligned}
\jump{h}:\qquad&f_1^{*,ec}= \average{h}\average{v_1} \\[0.1cm]
\jump{v_1}:\qquad&f_2^{*,ec}= \average{v_1}f_1^{*,ec} + \frac{1}{2}g\average{h^2} - \average{hB_1}\average{B_1} \\[0.1cm]
\jump{v_2}:\qquad&f_3^{*,ec}= \average{v_2}f_1^{*,ec} - \average{hB_1}\average{B_2} \\[0.1cm]
\jump{B_1}:\qquad&f_4^{*,ec}= \average{B_1}f_1^{*,ec}-\average{hB_1}\average{v_1} \\[0.1cm]
\jump{B_2}:\qquad&f_5^{*,ec}= \average{B_2}f_1^{*,ec}-\average{hB_1}\average{v_2}
\end{aligned}
\end{equation}
The equations \eqref{eq:fluxAlmostThere} are easily solved to determine the form of the entropy conserving flux to be
\begin{equation}\label{eq:SWMHDECFlux}
\vec{f}^{*,ec} = \begin{bmatrix}
\average{h}\average{v_1} \\[0.1cm]
\average{h}\average{v_1}^2 + \frac{1}{2}g\average{h^2} - \average{hB_1}\average{B_1}\\[0.1cm]
\average{h}\average{v_1}\average{v_2} - \average{hB_1}\average{B_2} \\[0.1cm]
\average{h}\average{v_1}\average{B_1}-\average{hB_1}\average{v_1} \\[0.1cm]
\average{h}\average{v_1}\average{B_2}-\average{hB_1}\average{v_2}
\end{bmatrix}.
\end{equation}
Interestingly, to guarantee the conservation of entropy we had to relax the divergence-free condition to $\partial_x(hB_1)\approx 0$. This assumption manifests itself in the fourth component of $\vec{f}^{*,ec}$, which is nonzero.  Essentially the fourth term in $\vec{f}^{*,ec}$ is the commutator of multiplication and averaging of $hB_1$. 
\end{proof}

\begin{rem}
If the flow occurs without the presence of a magnetic field then the SWMHD model becomes the traditional shallow water (SW) equations. The structure of $\vec{f}^{*,ec}$ \eqref{eq:SWMHDECFlux} can be separated into a shallow water component and magnetic field component, i.e.,
\begin{equation}
\vec{f}^{*,ec} = \begin{bmatrix}
\average{h}\average{v_1} \\[0.1cm]
\average{h}\average{v_1}^2 + \frac{1}{2}g\average{h^2} \\[0.1cm]
\average{h}\average{v_1}\average{v_2}  \\[0.1cm]
0\\[0.1cm]
0
\end{bmatrix}
+
\begin{bmatrix}
0 \\[0.1cm]
- \average{hB_1}\average{B_1}\\[0.1cm]
- \average{hB_1}\average{B_2} \\[0.1cm]
\average{h}\average{v_1}\average{B_1}-\average{hB_1}\average{v_1} \\[0.1cm]
\average{h}\average{v_1}\average{B_2}-\average{hB_1}\average{v_2}
\end{bmatrix}.
\end{equation}
Thus, if the magnetic field is zero we find 
\begin{equation}
\vec{f}^{*,ec} = \begin{bmatrix}
\average{h}\average{v_1} \\[0.1cm]
\average{h}\average{v_1}^2 + \frac{1}{2}g\average{h^2} \\[0.1cm]
\average{h}\average{v_1}\average{v_2}  \\[0.1cm]
0\\[0.1cm]
0
\end{bmatrix}
+
\begin{bmatrix}
0 \\[0.1cm]
0\\[0.1cm]
0 \\[0.1cm]
0 \\[0.1cm]
0
\end{bmatrix},
\end{equation}
and $\vec{f}^{*,ec}$ becomes the appropriate entropy conserving flux for the SW equations described in \cite{fjordholm2011,gassner2014}.
\end{rem}

\begin{rem}
We have considered a computational problem with a flat bottom topography. So it is a natural question for a shallow water type model if the approximation remains well-balanced, i.e., recovers a constant fluid height solution in the presence of a non-constant bottom topography \cite{winters2014}. Previous work by Fjordholm et al. \cite{fjordholm2011} provides a consistent, well-balanced, and entropy conserving treatment of a non-constant (and even discontinuous) bottom topography $b$. For example in the $x-$direction
\begin{equation}\label{botSource}
\left(gh\pderivative{b}{x}\right)_i \approx \frac{g}{2\Delta x_i}\left(\average{h}_{i+\tfrac{1}{2}}\jump{b}_{i+\tfrac{1}{2}} + \average{h}_{i-\tfrac{1}{2}}\jump{b}_{i-\tfrac{1}{2}}\right).
\end{equation}
In the presence of a non-constant bottom topography one would add the discretized source term \eqref{botSource} to the $x$ component of the momentum equation of the SWMHD model. Note that the treatment of the bottom topography source term by Fjordholm et al. in \eqref{botSource} is analogous to our treatment of the Janhunen source term for SWMHD \eqref{SourceTermDisc}. 
\end{rem}

\subsection{Dissipation Terms for an Entropy Stable Flux}\label{Sec:StableFlux}

To create an entropy stable numerical flux function we use the entropy conserving flux \eqref{eq:SWMHDECFlux} as a base and subtract some form of numerical dissipation, e.g,
\begin{equation}\label{dissipation}
\vec{f}^* = \vec{f}^{*,ec} - \frac{1}{2}\matrix{D}\jump{\vec{u}},
\end{equation}
where $\matrix{D}$ is a dissipation matrix. 

Our goal to guarantee the entropy stability of the approximation is to reformulate the dissipation term \eqref{dissipation} such that it is guaranteed to be positive. To do so we will consider the eigenstructure of the flux Jacobian $\matrix{A}$ altered by the Powell source term 
\begin{equation}\label{PowellSource}
\vec{s}_{Powell} = -\pderivative{}{x}(hB_1)\begin{bmatrix}
0 \\
B_1 \\
B_2 \\
v_1 \\
v_2
\end{bmatrix}.
\end{equation}
It is important to note that we use the Janhunen source term \eqref{SWMHD1DForced} to {\color{black}{derive an}} entropy {\color{black}{conservative numerical flux function}} in Sec. \ref{Sec:ConservingFlux}. However, to design an entropy stable approximation we require {\color{black}{that}} the {\color{black}{eigendecomposition of the}} flux Jacobian matrix {\color{black}{can be related to the entropy Jacobian \eqref{entropyJacobian}. This particular scaling, first examined by Merriam \cite{merriam1989} and explored more thoroughly by Barth \cite{barth99}, requires that the PDE system is symmetrizable.}} Previous {\color{black}{analysis of}} the SWMHD {\color{black}{system by Dellar}} \cite{dellar2002} and ideal MHD equations \cite{barth99,godunov1972} have demonstrated that the Powell source term is necessary to restore a symmetric MHD system. We {\color{black}{reiterate}} that the altered flux Jacobian is used only to derive the dissipation term. {\color{black}{Just as Lax-Friedrichs differs from Roe in the construction of a dissipation term, we use the Powell source term only to build our dissipation term.}} Thus, we do not introduce any inconsistency in the previous {\color{black}{entropy conserving flux}} derivations. {\color{black}{We demonstrate with numerical tests in Sec. \ref{ESRiemann} that the newly developed entropy stable flux functions are competitive with the well known Roe flux.}}

We denote the flux Jacobian modified with the Powell source term \eqref{PowellSource} as
\begin{equation}\label{alteredFluxJacobian}
\doublehat{\matrix{A}} = \begin{bmatrix} 0 & 1 & 0 & 0 & 0  \\ 
gh - v_1^2 + B_1^2 & 2v_1 & 0 & -B_1 & 0 \\
-v_1v_2+B_1B_2 & v_2 & v_1 & 0 & -B_1 \\
0 & 0 & 0 & v_1 & 0 \\
v_2B_1 - v_1B_2 & B_2 & -B_1 & 0 & v_1 \\
\end{bmatrix},
\end{equation}
equipped with a full set of eigenvalues
\begin{equation}\label{eigenvalues}
\lambda_1 = v_1-c_g,\quad\lambda_2 = v_1-B_1,\quad\lambda_3 = v_1,\quad\lambda_4 = v_1+B_1,\quad\lambda_5 = v_1+c_g,
\end{equation}
and right eigenvectors 
\begin{equation}\label{rightEV}
\doublehat{\matrix{R}} = \begin{bmatrix}
1 & 0 & 1 & 0 & 1 \\[0.1cm]
v_1-c_g & 0 & v_1 & 0 & v_1+c_g \\[0.1cm]
v_2 & 1 & v_2 & 1 & v_2 \\[0.1cm]
0 & 0 & \frac{c_g^2}{B_1} & 0 & 0 \\[0.1cm]
B_2 & 1 & B_2 & -1 & B_2 
\end{bmatrix},
\quad
\doublehat{\matrix{L}} = \doublehat{\matrix{R}}^{-1},
\end{equation}
where $c_g^2 = gh+B_1^2$ is the magnetogravity wave speed.

Now that we have a symmetrizable matrix $\doublehat{\matrix{A}}$ we utilize a previous result from Barth \cite{barth99} which provides a systematic approach to restructure a general eigenvalue problem to a symmetric eigenvalue problem. To do so we rescale the right eigenvectors of an eigendecomposition with respect to a right symmetrizer matrix in the following way:
\begin{lem}(Eigenvector Scaling)
Let $\matrix{A}\in\mathbb{R}^{n\times n}$ be an arbitrary diagonalizable matrix and $S$ the set of all right symmetrizers:
\begin{equation}
S=\left\{\matrix{B}\in\mathbb{R}^{n\times n}\,\big|\;\matrix{B}\;is\; s.p.d,\;\;\matrix{AB} = (\matrix{AB})^T\right\}.
\end{equation}
Further, let $\matrix{R}\in\mathbb{R}^{n\times n}$ denote the right eigenvector matrix which diagonalizes $\matrix{A}$, i.e., $\matrix{A}=\matrix{R}\boldsymbol\Lambda\matrix{R}^{-1}$, with $r$ distinct eigenvalues. Then for each $\matrix{B}\in S$ there exists a symmetric block diagonal matrix $\matrix{T}$ that block scales columns of $\matrix{R}$, $\tilde{R} = \matrix{RT}$, such that
\begin{equation}
\matrix{B}=\widetilde{\matrix{R}}\widetilde{\matrix{R}}^T,\; \matrix{A}=\widetilde{\matrix{R}}\boldsymbol\Lambda\widetilde{\matrix{R}}^{-1},
\end{equation}
which implies
\begin{equation}
\matrix{AB}=\widetilde{\matrix{R}}\boldsymbol\Lambda\widetilde{\matrix{R}}^{T}.
\end{equation}
\end{lem}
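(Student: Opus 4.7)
The plan is to find the scaling matrix $\matrix{T}$ by congruence-transforming the right symmetrizer $\matrix{B}$ into the eigenbasis of $\matrix{A}$ and then taking its symmetric positive definite square root cluster-by-cluster. First I would translate the right-symmetrizer hypothesis into a commutation relation in the eigenbasis. Because $\matrix{AB}=(\matrix{AB})^T=\matrix{B}\matrix{A}^T$, substituting $\matrix{A}=\matrix{R}\boldsymbol\Lambda\matrix{R}^{-1}$ and multiplying by $\matrix{R}^{-1}$ on the left and $\matrix{R}^{-T}$ on the right yields $\boldsymbol\Lambda\matrix{M}=\matrix{M}\boldsymbol\Lambda$, where $\matrix{M}:=\matrix{R}^{-1}\matrix{B}\matrix{R}^{-T}$. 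Note that $\matrix{M}$ is s.p.d.\ because it is a congruence transformation of the s.p.d.\ matrix $\matrix{B}$.

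The next step is to extract the block structure forced by this commutation. Ordering the columns of $\matrix{R}$ so that equal eigenvalues are grouped together, $\boldsymbol\Lambda$ consists of $r$ scalar diagonal blocks of sizes $m_1,\dots,m_r$ (summing to $n$). A standard spectral argument shows that any matrix commuting with such a diagonal matrix must be block diagonal with the same block pattern, so $\matrix{M}=\mathrm{diag}(\matrix{M}_1,\dots,\matrix{M}_r)$. Each block $\matrix{M}_k$ inherits symmetric positive definiteness from $\matrix{M}$, so it possesses a unique s.p.d.\ square root $\matrix{M}_k^{1/2}$. Assembling these gives a symmetric block diagonal $\matrix{T}$ with $\matrix{T}^2=\matrix{M}$, and crucially the block-diagonal pattern of $\matrix{T}$ matches that of $\boldsymbol\Lambda$, so $\matrix{T}\boldsymbol\Lambda=\boldsymbol\Lambda\matrix{T}$ and $\matrix{T}$ is invertible.

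Finally I would verify the three claimed identities by direct computation with $\widetilde{\matrix{R}}:=\matrix{RT}$. The symmetric factorization follows from
\begin{equation}
\widetilde{\matrix{R}}\widetilde{\matrix{R}}^T=\matrix{R}\matrix{T}\matrix{T}^T\matrix{R}^T=\matrix{R}\matrix{M}\matrix{R}^T=\matrix{B}.
\end{equation}
The eigendecomposition $\matrix{A}=\widetilde{\matrix{R}}\boldsymbol\Lambda\widetilde{\matrix{R}}^{-1}$ follows because $\matrix{T}$ commutes with $\boldsymbol\Lambda$, so $\matrix{RT}\boldsymbol\Lambda\matrix{T}^{-1}\matrix{R}^{-1}=\matrix{R}\boldsymbol\Lambda\matrix{R}^{-1}=\matrix{A}$. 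Multiplying these two identities then yields $\matrix{AB}=\widetilde{\matrix{R}}\boldsymbol\Lambda\widetilde{\matrix{R}}^T$.

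The main obstacle is the block-commutation step: justifying that any matrix commuting with a diagonal matrix whose entries come in clusters of multiplicities $m_k$ is necessarily block diagonal with exactly those $m_k$ blocks, and then confirming that the s.p.d.\ square root respects that block decomposition. Both are classical consequences of the spectral theorem applied to each cluster independently, so no subtle obstruction arises, but the argument must be stated carefully to cover the case where $\boldsymbol\Lambda$ has repeated eigenvalues, since this is precisely where the \emph{block} (rather than purely diagonal) scaling is required.
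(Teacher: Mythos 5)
Your proof is correct. The paper itself does not prove this lemma --- it simply defers to Barth (1999) --- and your argument is precisely the standard one used there: conjugate the symmetrizer into the eigenbasis to get the s.p.d.\ matrix $\matrix{M}=\matrix{R}^{-1}\matrix{B}\matrix{R}^{-T}$ commuting with $\boldsymbol\Lambda$, conclude block diagonality of $\matrix{M}$ over the eigenvalue clusters, take blockwise s.p.d.\ square roots to build $\matrix{T}$, and verify the three identities using $\matrix{T}\boldsymbol\Lambda=\boldsymbol\Lambda\matrix{T}$. The only point to state carefully (as you note) is the grouping of repeated eigenvalues so that the commutation argument yields blocks of the correct sizes; with that, your write-up is a complete, self-contained replacement for the citation.
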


\begin{proof} The proof of the eigenvector scaling lemma is given in \cite{barth99}.\end{proof}

\begin{thm} (Entropy Stable 1 (ES1)) If we apply the diagonal scaling matrix
\begin{equation}\label{scalingMatrixInTheorem}
\matrix{T} = diag\left(\frac{c}{c_g\sqrt{2g}}\,,\,\frac{c}{\sqrt{2g}}\,,\,\frac{B_1}{c_g\sqrt{g}}\,,\,\frac{c}{\sqrt{2g}}\,,\,\frac{c}{c_g\sqrt{2g}}\right),
\end{equation}
to the matrix of right eigenvectors $\doublehat{\matrix{R}}$ \eqref{rightEV}, then we obtain the Merriam identity \cite{merriam1989} (Eq. 7.3.1 pg. 77) 
\begin{equation}\label{MerriamIdentity}
\matrix{H} = \widetilde{\matrix{R}}\widetilde{\matrix{R}}^T = \left(\doublehat{\matrix{R}}\matrix{T}\right) \left(\doublehat{\matrix{R}}\matrix{T}\right)^T = \doublehat{\matrix{R}}\matrix{S}\doublehat{\matrix{R}}^T,
\end{equation}
that relates the right eigenvectors of $\doublehat{\matrix{A}}$ to the entropy Jacobian matrix \eqref{entropyJacobian}. For convenience, we introduce the diagonal scaling matrix $\matrix{S}=\matrix{T}\,^2$ in \eqref{MerriamIdentity}. We then have the guaranteed entropy stable flux interface contribution
\begin{equation}\label{minimalDiss}
\vec{f}^{*,ES1} = \vec{f}^{*,ec}-\frac{1}{2}\matrix{D}\jump{\vec{u}}=\vec{f}^{*,ec} - \frac{1}{2} \doublehat{\matrix{R}}|\doublehat{\boldsymbol\Lambda}|\matrix{S}\doublehat{\matrix{R}}^T\jump{\vec{q}}.
\end{equation}
\end{thm}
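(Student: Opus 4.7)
The plan is to prove the theorem in two parts: first establish the Merriam identity \eqref{MerriamIdentity} for the particular scaling $\matrix{T}$, and then use it to derive the discrete entropy inequality for $\vec{f}^{*,ES1}$.

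For the Merriam identity, my approach is a direct, entry-by-entry verification. Squaring $\matrix{T}$ yields
\begin{equation*}
\matrix{S} = \mathrm{diag}\!\left(\frac{c^2}{2gc_g^2},\; \frac{c^2}{2g},\; \frac{B_1^2}{gc_g^2},\; \frac{c^2}{2g},\; \frac{c^2}{2gc_g^2}\right),
\end{equation*}
so each entry of $\doublehat{\matrix{R}}\matrix{S}\doublehat{\matrix{R}}^T$ is $\sum_{k=1}^5 \matrix{S}_{kk}\,\doublehat{\matrix{R}}_{ik}\doublehat{\matrix{R}}_{jk}$. The only nontrivial algebraic input is the relation $c_g^2 = c^2 + B_1^2$, which produces repeated collapses: for example, the $(1,1)$ entry immediately becomes $\frac{c^2+B_1^2}{gc_g^2} = \frac{1}{g} = \matrix{H}_{11}$. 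I would then march through the upper triangle of $\matrix{H}$ entry by entry, tracking how the scaling factors in $\matrix{T}$ are engineered so that the magnetoacoustic wave structure of $\doublehat{\matrix{R}}$ cancels to reproduce \eqref{entropyJacobian}.

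With the identity in hand, the entropy inequality follows by substituting $\vec{f}^{*,ES1}$ into the total entropy update \eqref{TotalUpdate}. The entropy-conserving base $\vec{f}^{*,ec}$, combined with the compatible source discretization \eqref{SourceTermDisc}, contributes exactly $-\jump{F}$ by the theorem of Section \ref{Sec:ConservingFlux}. The added dissipation contributes
\begin{equation*}
\jump{\vec{q}}^T\!\left(-\tfrac{1}{2}\doublehat{\matrix{R}}|\doublehat{\boldsymbol\Lambda}|\matrix{S}\doublehat{\matrix{R}}^T\jump{\vec{q}}\right) = -\tfrac{1}{2}\,\vec{w}^T|\doublehat{\boldsymbol\Lambda}|\matrix{S}\,\vec{w}, \qquad \vec{w} := \doublehat{\matrix{R}}^T\jump{\vec{q}},
\end{equation*}
which is non-positive because the diagonal matrix $|\doublehat{\boldsymbol\Lambda}|\matrix{S}$ has non-negative entries. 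Hence
\begin{equation*}
\pderivative{}{t}(\Delta x_L U_L + \Delta x_R U_R) \leq -\jump{F},
\end{equation*}
the discrete entropy inequality that defines entropy stability.

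The main obstacle will be the Merriam identity itself. The coupled structure of $\doublehat{\matrix{R}}$, in particular the entry $c_g^2/B_1$ in the middle column that forces the unusual scaling $\matrix{T}_{33} = B_1/(c_g\sqrt{g})$, means that the off-diagonal entries involving $B_1$ and $B_2$ interact nontrivially across columns, and the cancellations must be verified with care. A secondary subtlety is the use of the Powell-modified Jacobian $\doublehat{\matrix{A}}$ rather than the physical flux Jacobian $\matrix{A}$ when building the dissipation; the discussion preceding the theorem already argues this is harmless because the Powell modification appears only inside the dissipation term, which vanishes on coincident states, so both consistency \eqref{consistency} and the entropy conservation property of $\vec{f}^{*,ec}$ are untouched.
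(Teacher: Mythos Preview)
Your proposal is correct and follows essentially the same route as the paper: establish the Merriam identity $\matrix{H}=\doublehat{\matrix{R}}\matrix{S}\doublehat{\matrix{R}}^T$, feed the ES1 flux into the semi-discrete entropy update \eqref{TotalUpdate}, let the $\vec{f}^{*,ec}$ part collapse to $-\jump{F}$, and recognize the remaining dissipation term as a non-positive quadratic form in $\jump{\vec{q}}$. The only cosmetic differences are that the paper appeals to Barth's eigenvector scaling lemma (Lemma~1) rather than your proposed entry-by-entry check of \eqref{MerriamIdentity}, and the paper first writes the dissipation as $\doublehat{\matrix{R}}|\doublehat{\boldsymbol\Lambda}|\doublehat{\matrix{L}}\jump{\vec{u}}$ and passes to $\jump{\vec{q}}$ via $\jump{\vec{u}}\simeq\matrix{H}\jump{\vec{q}}$, whereas you (more cleanly) work directly with the $\jump{\vec{q}}$ form stated in \eqref{minimalDiss}.
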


\begin{proof}
We define the dissipation term in the numerical flux \eqref{dissipation} to be
\begin{equation}\label{dissTerm1}
-\frac{1}{2}\matrix{D}\jump{\vec{u}} = -\frac{1}{2} |\doublehat{\matrix{A}}|\jump{\vec{u}} = -\frac{1}{2} \doublehat{\matrix{R}}|\doublehat{\boldsymbol\Lambda}|\doublehat{\matrix{L}}\jump{\vec{u}},
\end{equation}
where the eigendecomposition of $\doublehat{\matrix{A}} = \doublehat{\matrix{R}}\doublehat{\boldsymbol\Lambda}\doublehat{\matrix{L}}$ is given by \eqref{eigenvalues} and \eqref{rightEV}. We define entropy stability to mean the approximation guarantees that the entropy within the system is a decreasing function, satisfying the following inequality
\begin{equation}
\pderivative{U}{t} + \pderivative{F}{x} - \vec{q}^T\vec{s} \leq 0.
\end{equation}
From the previously computed discrete entropy update \eqref{TotalUpdate} the total entropy within an element (now including the dissipative term \eqref{dissTerm1}) is
\begin{equation}\label{TotalUpdate2}
\begin{aligned}
\pderivative{}{t}(\Delta x_L U_L + \Delta x_R U_R) &= -\jump{\vec{q}\cdot\vec{f}} + \jump{\vec{q}}^T\vec{f}^*  + 2\average{\Delta x\vec{q}}{\vec{s}}_{i+\tfrac{1}{2}}, \\
&= -\jump{\vec{q}\cdot\vec{f}} + \jump{\vec{q}}^T\vec{f}^{*,ec} - \frac{1}{2}\jump{\vec{q}}^T \doublehat{\matrix{R}}|\doublehat{\boldsymbol\Lambda}|\doublehat{\matrix{L}}\jump{\vec{u}} + 2\average{\Delta x\vec{q}}{\vec{s}}_{i+\tfrac{1}{2}}, \\
\pderivative{}{t}(\Delta x_L U_L + \Delta x_R U_R) + \jump{F} &= - \frac{1}{2}\jump{\vec{q}}^T \doublehat{\matrix{R}}|\doublehat{\boldsymbol\Lambda}|\doublehat{\matrix{L}}\jump{\vec{u}},
\end{aligned}
\end{equation}
from the design of the entropy conserving flux $\vec{f}^{*,ec}$. To ensure entropy stability, we must guarantee that the RHS term in \eqref{TotalUpdate2} is non-positive. Unfortunately, it was shown by Barth \cite{barth99} that the term 
\begin{equation}\label{RHSEntropy}
- \frac{1}{2}\jump{\vec{q}}^T \doublehat{\matrix{R}}|\doublehat{\boldsymbol\Lambda}|\doublehat{\matrix{L}}\jump{\vec{u}},
\end{equation}
may become positive in the presence of very strong shocks. However, we know from entropy symmetrization theory, e.g \cite{barth99,merriam1989}, that the entropy Jacobian $\matrix{H}$, given by \eqref{entropyJacobian}, is a right symmetrizer for the flux Jacobian that incorporates the Powell source term $\doublehat{\matrix{A}}$. Therefore, with the proper scaling matrix $\matrix{T}$ we acquire the Merriam identity 
\begin{equation}\label{MerriamIdentity2}
\matrix{H} = \widetilde{\matrix{R}}\widetilde{\matrix{R}}^T = \left(\doublehat{\matrix{R}}\matrix{T}\right) \left(\doublehat{\matrix{R}}\matrix{T}\right)^T = \doublehat{\matrix{R}}\matrix{S}\doublehat{\matrix{R}}^T.
\end{equation}
The rescaling of the right eigenvectors of $\doublehat{\matrix{A}}$ to satisfy the Merriam identity \eqref{MerriamIdentity} is sufficient to guarantee the negativity of \eqref{RHSEntropy}. We see from \eqref{RHSEntropy} and \eqref{MerriamIdentity2}
\begin{equation}\label{signSatified}
\begin{aligned}
-\frac{1}{2}\jump{\vec{q}}^T \doublehat{\matrix{R}}|\doublehat{\boldsymbol\Lambda}|\doublehat{\matrix{L}}\jump{\vec{u}} &\simeq - \frac{1}{2}\jump{\vec{q}}^T \doublehat{\matrix{R}}|\doublehat{\boldsymbol\Lambda}|\doublehat{\matrix{L}}\vec{u}_{\vec{q}}\jump{\vec{q}},  \\
&= - \frac{1}{2}\jump{\vec{q}}^T \doublehat{\matrix{R}}|\doublehat{\boldsymbol\Lambda}|\doublehat{\matrix{L}}\matrix{H}\jump{\vec{q}}, \\
&=- \frac{1}{2}\jump{\vec{q}}^T \doublehat{\matrix{R}}|\doublehat{\boldsymbol\Lambda}|\doublehat{\matrix{L}}\left(\doublehat{\matrix{R}}\matrix{S}\doublehat{\matrix{R}}^T\right)\jump{\vec{q}}, \\
&=- \frac{1}{2}\jump{\vec{q}}^T \doublehat{\matrix{R}}|\doublehat{\boldsymbol\Lambda}|\matrix{S}\doublehat{\matrix{R}}^T\jump{\vec{q}},
\end{aligned}
\end{equation}
where we used that $\doublehat{\matrix{L}}=\doublehat{\matrix{R}}^{-1}$. So with the appropriate diagonal scaling matrix $\matrix{S}$ we have shown that \eqref{signSatified} is guaranteed negative because the product is a quadratic form scaled by a negative. We use the right eigenvectors from \eqref{rightEV}, the constraint \eqref{MerriamIdentity}, and straightforward algebraic manipulation to determine the diagonal scaling matrix
\begin{equation}\label{scalingMatrix}
\matrix{T} = diag\left(\frac{c}{c_g\sqrt{2g}}\,,\,\frac{c}{\sqrt{2g}}\,,\,\frac{B_1}{c_g\sqrt{g}}\,,\,\frac{c}{\sqrt{2g}}\,,\,\frac{c}{c_g\sqrt{2g}}\right),
\end{equation}
where $c^2 = gh$ and $c_g^2 = gh+B_1^2$ are the wave speed and magnetogravity wave speed respectively.
\end{proof}

\begin{rem}
There are other possible, negativity guaranteeing (but more dissipative) choices for the dissipation term \eqref{dissipation}. For example, if we make the simple choice of dissipation matrix to be
\begin{equation}\label{LFDMat}
\matrix{D} = |\lambda_{max}|\matrix{I},
\end{equation}
where $\lambda_{max}$ is the largest eigenvalue of the system from \eqref{eigenvalues} and $\matrix{I}$ is the identity matrix, then we obtain a local Lax-Friedrichs type interface stabilization
\begin{equation}\label{LFDiss}
\begin{aligned}
\vec{f}^{*,ES2} &=  \vec{f}^{*,ec} - \frac{1}{2}|\lambda_{max}|\matrix{I}\jump{\vec{u}}, \\
&=  \vec{f}^{*,ec} - \frac{1}{2}|\lambda_{max}|\matrix{H}\jump{\vec{q}}, \\
&= \vec{f}^{*,ec} - \frac{1}{2}\doublehat{\matrix{R}}|\lambda_{max}|\matrix{S}\doublehat{\matrix{R}}^T\jump{\vec{q}},\\
\end{aligned}
\end{equation} 
where, again, we use the Merriam identity \eqref{MerriamIdentity} for the entropy Jacobian $\matrix{H}$.
\end{rem}

\section{Numerical Results}\label{NumericalResults}

In this section, we numerically verify the theoretical findings for the entropy conserving and entropy stable approximations for the SWMHD equations. To integrate the semi-discrete formulation in time we use a low storage five-stage, fourth-order accurate Runge-Kutta time integrator of Carpenter and Kennedy \cite{Carpenter&Kennedy:1994}. First, in Sec. \ref{EOC}, we consider a test problem with a known analytical solution to demonstrate the accuracy of the entropy conserving method as well as the optimal and Lax-Friedrichs stabilized formulations. Next, Sec. \ref{EntropyCons} demonstrates the entropy conservation of the approximation for a strong Riemann problem. In Sec. \ref{ECRiemann} we demonstrate the computed solution of a strong Riemann problem for the entropy conserving method. This solution will exhibit significant oscillations in shocked regions. Finally, in Sec. \ref{ESRiemann} we compare the two entropy stable approximations \eqref{minimalDiss} and \eqref{LFDiss} against a high-resolution approximation comparable to that presented in the literature \cite{desterck2001,rossmanith2002}.

\subsection{Convergence}\label{EOC}

For the convergence test, we switch to the manufactured solution technique and generate a smooth and periodic solution 
\begin{equation}
\begin{bmatrix}
h(x,t)\\
hv_1(x,t)\\
hv_2(x,t)\\
hB_1(x,t)\\
hB_2(x,t)\\
\end{bmatrix}
=%
\begin{bmatrix}
2+\sin{(2\pi\,(x-t))}\\
2+\sin{(2\pi\,(x-t))}\\
2+\sin{(2\pi\,(x-t))}\\
1\\
4+2\,\sin{(2\pi\,(x-t))}
\end{bmatrix}
\end{equation}
with an additional analytic source term on the right hand side
\begin{equation}
\begin{bmatrix}
s_1(x,t)\\
s_2(x,t)\\
s_3(x,t)\\
s_4(x,t)\\
s_5(x,t)
\end{bmatrix}
=%
\begin{bmatrix}
0\\
h_x(x,t)\left(g\,h(x,t)+\frac{1}{h(x,t)^2}\right)\\
0\\
0\\
0
\end{bmatrix}
\end{equation}
on the domain $\Omega=[-1,1]$ with periodic boundary conditions and final time $T=2$. We select the time step for the RK method small enough such that the error in the approximation is dominated by the error in the spatial discretizations. For all computations, a regular grid is chosen according to the number of grid cells listed in the tables below. Except for the entropy conserving scheme, where we also test a stretched mesh with a fixed ratio of 
\begin{equation}\label{gridRatio}
\frac{\Delta x_{max}}{\Delta x_{min}}=4.
\end{equation}

First, we test the entropy conserving scheme on a regular grid. The $L_2$-errors for all conserved quantities are shown in Tbl.~\ref{tab:EOC_EC_regular}. It is interesting to note, that for the specific regular grid used in the numerical experiment, the entropy conserving scheme achieves second order accuracy. We note that the average accuracy is hovering around $1.7$, however looking closely at the finest grid results, it is clear that the experimental order of convergence is close to $2$. 
However, the higher order convergence for the finite volume scheme is an effect of approximating the solution on a regular grid. The scheme drops to first order accuracy when an irregular grid is chosen as shown in Tbl.~\ref{tab:EOC_EC_irregular}, where we stretch the grid with a constant factor of \eqref{gridRatio}.

\begin{table}[!ht]
\begin{center}
\begin{tabular}{|c||c|c|c|c|c|}
\hline
$\#$ elements & $L_2$ error $h$ & $L_2$ error $hv_1$ & $L_2$ error $hv_2$ & $L_2$ error $hB_1$ & $L_2$ error $hB_2$ \\
\hline\hline
50   & 2.05E-2  & 2.12E-2 & 2.91E-2   & 3.77E-2   & 1.62E-1\\
\hline
100 &  8.78E-3  & 9.14E-3 & 9.45E-3   &  1.90E-2   & 3.08E-2\\ 
\hline
200 &  2.45E-3  & 2.50E-3 & 2.59E-3   &  3.12E-3   & 7.22E-3\\ 
\hline
400 &  6.17E-4  & 6.26E-4 & 6.37E-4   &  7.85E-4   & 1.79E-3\\ 
\hline\hline
avg EOC & \bf 1.69 & \bf 1.69 & \bf 1.84 & \bf 1.86 & \bf 2.17\\
\hline
\end{tabular}
\end{center}
\caption{Experimental order of convergence EOC for \textit{entropy conserving} scheme on a regular grid}
\label{tab:EOC_EC_regular}
\end{table}%

\begin{table}[!ht]
\begin{center}
\begin{tabular}{|c||c|c|c|c|c|}
\hline
$\#$ elements & $L_2$ error $h$ & $L_2$ error $hv_1$ & $L_2$ error $hv_2$ & $L_2$ error $hB_1$ & $L_2$ error $hB_2$ \\
\hline\hline
50   & 1.44E-1  & 2.50E-1 & 5.90E-1   & 1.64E-1   & 1.05E-0\\
\hline
100 &  5.57E-2  & 9.66E-2 & 2.11E-1   &  5.57E-2   & 2.62E-1\\ 
\hline
200 &  2.73E-2  & 4.57E-2 & 1.02E-2   &  2.47E-2   & 1.22E-1\\ 
\hline
400 &  1.34E-2  & 2.26E-2 & 5.45E-2   &  1.20E-2   & 6.22E-2\\ 
\hline\hline
avg EOC & \bf 1.14 & \bf 1.16 & \bf 1.15 & \bf 1.26 & \bf 1.36\\
\hline
\end{tabular}
\end{center}
\caption{Experimental order of convergence EOC for \textit{entropy conserving} scheme on an irregular grid with a stretching factor of \eqref{gridRatio}.}
\label{tab:EOC_EC_irregular}
\end{table}%

Next, we demonstrate the convergence of the two entropy stable finite volume schemes. The convergence results for the ES1 method are shown for the regular grid test in Tbl.~\ref{tab:EOC_ROE_regular} and the irregular grid test in Tbl.~\ref{tab:EOC_ROE_irregular}, where we see the average experimental convergence rate for either the regular or irregular grids are both first order accurate. The convergence behavior for the ES2 scheme on both grids fluctuates. Thus, we decided to run more tests on even finer grids to demonstrate that, again, the experimental order of convergence for regular or irregular grids is in the range of $0.7 -1.36$.

\begin{table}[!ht]
\begin{center}
\begin{tabular}{|c||c|c|c|c|c|}
\hline
$\#$ elements & $L_2$ error $h$ & $L_2$ error $hv_1$ & $L_2$ error $hv_2$ & $L_2$ error $hB_1$ & $L_2$ error $hB_2$ \\
\hline\hline
50   & 6.24E-2  & 1.87E-1 & 6.44E-2   & 6.84E-3   & 1.16E-1\\
\hline
100 &  2.91E-2  & 8.49E-2 & 2.94E-2   &  3.20E-3   & 5.33E-2\\ 
\hline
200 &  1.25E-2  & 3.38E-2 & 1.25E-2   &  1.33E-3   & 2.28E-2\\ 
\hline
400 &  5.79E-3  & 1.27E-2 & 5.78E-3   &  4.72E-4   & 1.10E-2\\ 
\hline\hline
avg EOC & \bf 1.14 & \bf 1.29 & \bf 1.16 & \bf 1.29 & \bf 1.13\\
\hline
\end{tabular}
\end{center}
\caption{Experimental order of convergence EOC for \textit{ES1 scheme} \eqref{minimalDiss} on a regular grid.}
\label{tab:EOC_ROE_regular}
\end{table}%

\begin{table}[!ht]
\begin{center}
\begin{tabular}{|c||c|c|c|c|c|}
\hline
$\#$ elements & $L_2$ error $h$ & $L_2$ error $hv_1$ & $L_2$ error $hv_2$ & $L_2$ error $hB_1$ & $L_2$ error $hB_2$ \\
\hline\hline
50   & 8.29E-2  & 2.72E-1 & 8.81E-2   & 9.06E-3   & 1.66E-1\\
\hline
100 &  4.25E-2  & 1.40E-1 & 4.40E-2   &  4.20E-3   & 8.33E-2\\ 
\hline
200 &  2.04E-2  & 6.66E-2 & 2.07E-2   &  1.82E-3   &  3.96E-2\\ 
\hline
400 &  1.02E-2  & 3.147E-2 & 1.02E-2   &  6.66E-4   &  1.99E-2\\ 
\hline\hline
avg EOC & \bf 1.01 & \bf 1.04 & \bf 1.04 & \bf 1.26 & \bf 1.02\\
\hline
\end{tabular}
\end{center}
\caption{Experimental order of convergence EOC for \textit{ES1 scheme} \eqref{minimalDiss} on an irregular grid with a stretching factor of \eqref{gridRatio}.}
\label{tab:EOC_ROE_irregular}
\end{table}%

\begin{table}[!ht]
\begin{center}
\begin{tabular}{|c||c|c|c|c|c|}
\hline
$\#$ elements & $L_2$ error $h$ & $L_2$ error $hv_1$ & $L_2$ error $hv_2$ & $L_2$ error $hB_1$ & $L_2$ error $hB_2$ \\
\hline\hline
50   & 2.29E-2  & 2.19E-1 & 2.48E-2   & 2.91E-3   & 4.80E-2\\
\hline
100 &  1.02E-2  & 9.92E-2 & 1.14E-2   &  1.41E-3   & 2.13E-2\\ 
\hline
200 &  9.14E-3  & 4.03E-2 & 9.37E-3   &  6.74E-4   & 1.85E-2\\ 
\hline
400 &  7.06E-3  & 1.62E-2 & 7.08E-3   &  2.92E-4   & 1.42E-2\\ 
\hline
800 &  4.44E-3  & 7.06E-3 & 4.44E-3   &  1.08E-4   & 8.89E-3\\ 
\hline
1600 &  2.49E-3  & 3.32E-3 & 2.49E-3   &  3.48E-5   & 4.99E-3\\ 
\hline
3200 &  1.32E-3  & 1.62E-3 & 1.32E-3   &  1.01E-5   & 2.64E-3\\ 
\hline\hline
avg EOC & \bf 1.24 & \bf 1.18 & \bf 0.70 & \bf 1.36 & \bf 0.70\\
\hline
\end{tabular}
\end{center}
\caption{Experimental order of convergence EOC for \textit{ES2 scheme} \eqref{LFDiss} on a regular grid.}
\label{tab:EOC_LF_regular}
\end{table}%

\begin{table}[!ht]
\begin{center}
\begin{tabular}{|c||c|c|c|c|c|}
\hline
$\#$ elements & $L_2$ error $h$ & $L_2$ error $hv_1$ & $L_2$ error $hv_2$ & $L_2$ error $hB_1$ & $L_2$ error $hB_2$ \\
\hline\hline
50   & 6.65E-2  & 3.13E-1 & 6.74E-2   & 3.66E-3   & 1.38E-1\\
\hline
100 &  3.39E-2  & 1.58E-1 & 3.43E-2   &  1.81E-3   & 6.98E-2\\ 
\hline
200 &  1.97E-2  & 7.29E-2 & 1.97E-2   &  8.82E-4   & 3.96E-2\\ 
\hline
400 &  1.26E-2  & 3.37E-2 & 1.26E-2   &  3.94E-4   & 2.51E-2\\ 
\hline
800 &  7.83E-3  & 1.64E-2 & 7.80E-3   &  1.51E-4   & 1.56E-2\\ 
\hline
1600 &  4.57E-3  & 8.22E-3 & 4.57E-3   &  4.97E-5   & 9.12E-3\\ 
\hline
3200 &  2.54E-3  & 4.15E-3 & 2.54E-3   &  1.48E-5   & 5.07E-3\\ 
\hline\hline
avg EOC & \bf 0.78 & \bf 1.04 & \bf 0.79 & \bf 1.33 & \bf 0.79\\
\hline
\end{tabular}
\end{center}
\caption{Experimental order of convergence EOC for \textit{ES2 scheme} on an irregular grid with a stretching factor of \eqref{gridRatio}.}
\label{tab:EOC_LF_irregular}
\end{table}%

\subsection{Mass, Momentum, and Entropy Conservation}\label{EntropyCons}

By design we know the finite volume scheme for the SWMHD equations with the Janhunen source term \eqref{SWMHD1DForced} will conserve mass and momentum. From the derivations in \ref{Sec:ConservingFlux} we know that the scheme also exactly preserves the entropy on a general grid, if we use the newly designed flux $\vec{f}^{*,ec}$ \eqref{eq:SWMHDECFlux}. We measure the change in any of the conservative variables with
\begin{equation}
\label{eq:errortotalenergy}
\Delta e(T) := |e_{int}(t=0) - e_{int}(T)|,
\end{equation}
where, for example, $e_{int}$ is the entropy function
\begin{equation}\label{totalEnergy}
U =  \frac{1}{2}\left(gh^2+hv_1^2+hv_2^2+hB_1^2+hB_2^2\right)
\end{equation} 
integrated with a Gauss-Lobatto quadrature rule over the whole domain.

To demonstrate the conservative properties we consider the one dimensional strong Riemann problem taken from \cite{desterck2001} with the initial condition
\begin{equation}\label{StrongRiemann}
\begin{bmatrix}h\\v_1\\v_2\\B_1\\B_2\end{bmatrix} = \left\{
\begin{array}{lc}
\left[1,0,0,1,0\right]^T, & \textrm{if}\quad x \leq 0, \\ 
\left[2,0,0,0.5,1\right]^T, & \textrm{if}\quad x > 0,
\end{array}
\right.
\end{equation}
on the domain $\Omega = [-1,1]$ and periodic boundary conditions. We compute the error in the change of each conservative variable for three values of the Courant-Friedrichs-Lewy (CFL) number: 1.0, 0.1, and 0.01. For each simulation the entropy conserving finite volume scheme used 100 regular grid cells for the results in Tbl. \ref{tab:conservation} and 100 irregular grid cells in Tbl. \ref{tab:conservation_irregulargrid}. The final time of each simulation was $T=0.4$.
 
We see that for each value of the CFL number we obtain errors in the mass and momentum on the order of machine precision. Also, as expected, the addition of the Janhunen source term to enforce the divergence condition causes us to lose conservation of the quantities $h\vec{B}$. Interestingly, in the error for the change in entropy we see the dissipative influence of the time integrator. The dissipation introduced by the temporal discretization can be significantly reduced if we shrink the time step. We see that the error of the entropy can be lowered to single or double machine precision by decreasing the CFL number, and hence the time step.

It is also possible to see the temporal accuracy in Tbl. \ref{tab:conservation} and Tbl. \ref{tab:conservation_irregulargrid}. If we shrink the time step by a factor ten we see that the error in the entropy shrinks by a little over a factor of $10^4$, as we expect for a fourth order time integrator. 

\begin{table}[!ht]
\begin{center}
\begin{tabular}{|c||c|c|c|c|c|c|}
\hline
$CFL$ & $h$  & $h\,v_1$ & $h\,v_2$& $h\,B_1$ & $h\,B_2$ & $U$\\
\hline\hline
$1.0$  & $-2.22E-16$ & $ 3.53E-14$ & $-5.06E-14$ & $ -2.82E-04$ & $ 6.61E-04$ & $-7.38E-05$\\
\hline
$0.1$ & $4.44E-16$ & $8.28E-14$ & $-6.99E-14$ & $-2.82E-04$ & $6.64E-04$ & $-1.54E-09$\\
\hline
$0.01$ & $-9.99E-16$ & $9.66E-14$ & $-7.48E-14$ & $-2.82E-04$ & $6.64E-04$ & $-9.20E-14$\\
\hline
\end{tabular}
\end{center}
\caption{Conservation errors (integrated over the whole domain) of the entropy conserving approximation of the Riemann problem \eqref{StrongRiemann} for different CFL numbers, final time $T=0.4$, and $100$ regular grid cells.}
\label{tab:conservation}
\end{table}%

\begin{table}[!ht]
\begin{center}
\begin{tabular}{|c||c|c|c|c|c|c|}
\hline
$CFL$ & $h$  & $h\,v_1$ & $h\,v_2$& $h\,B_1$ & $h\,B_2$ & $U$ \\
\hline\hline
$1.0$  & $0.0$ & $ 0.0$ & $4.51E-17$ & $ -9.67E-04$ & $ 2.47E-03$ & $-2.05E-05$\\
\hline
$0.1$ & $-3.33E-16$ & $-1.04E-17$ & $-2.57E-14$ & $-9.66E-04$ & $2.47E-03$ & $-3.74E-10$\\
\hline
$0.01$ & $-3.44E-15$ & $3.40E-16$ & $2.64E-16$ & $-9.66E-04$ & $2.47E-03$ & $-2.11E-14$\\
\hline
\end{tabular}
\end{center}
\caption{Conservation errors (integrated over the whole domain) for different CFL numbers with end time $t=0.4$ on an irregular grid with a stretching factor of \eqref{gridRatio}.}
\label{tab:conservation_irregulargrid}
\end{table}%

\subsection{Entropy Conserving Riemann Problem}\label{ECRiemann}

We now apply the entropy conserving (EC) scheme to the strong Riemann problem \eqref{StrongRiemann}, except we choose inflow/outflow type boundary conditions. The computation is performed on 100 regular grid cells with CFL number 0.1 up to a final time of $T=0.4$. To create a reference solution we use the ES1 scheme on 5000 grid cells. We present the solution computed with the EC finite volume method against the reference solution in Fig. \ref{fig:EC}. 

The results for the computed fluid height $h$, velocities $v_1$, $v_2$, and magnetic field $B_1$, $B_2$ respectively in Fig. \ref{fig:EC} show that the EC scheme computes the rarefactions and the shocks in the solution quite accurately, but at the expense of large post-shock oscillations. These oscillations are expected as energy must be dissipated across the shock but the EC scheme is basically dissipation free except for the influence of the time integrator (as we previously noted in Sec. \ref{EntropyCons}).
\begin{figure}[!ht]
\begin{center}
{
\includegraphics[scale=0.575]{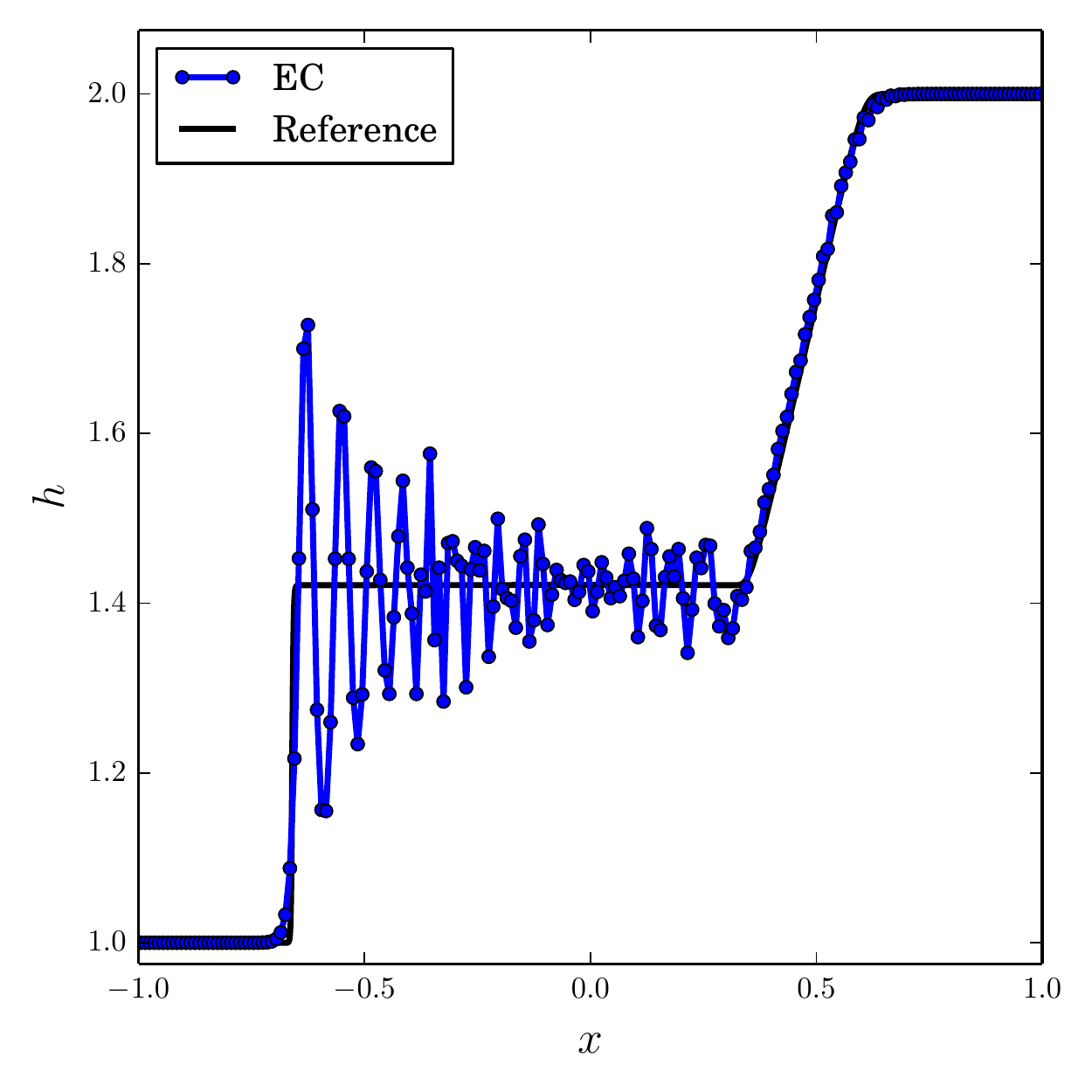}
}
\\
{
\includegraphics[scale=0.575]{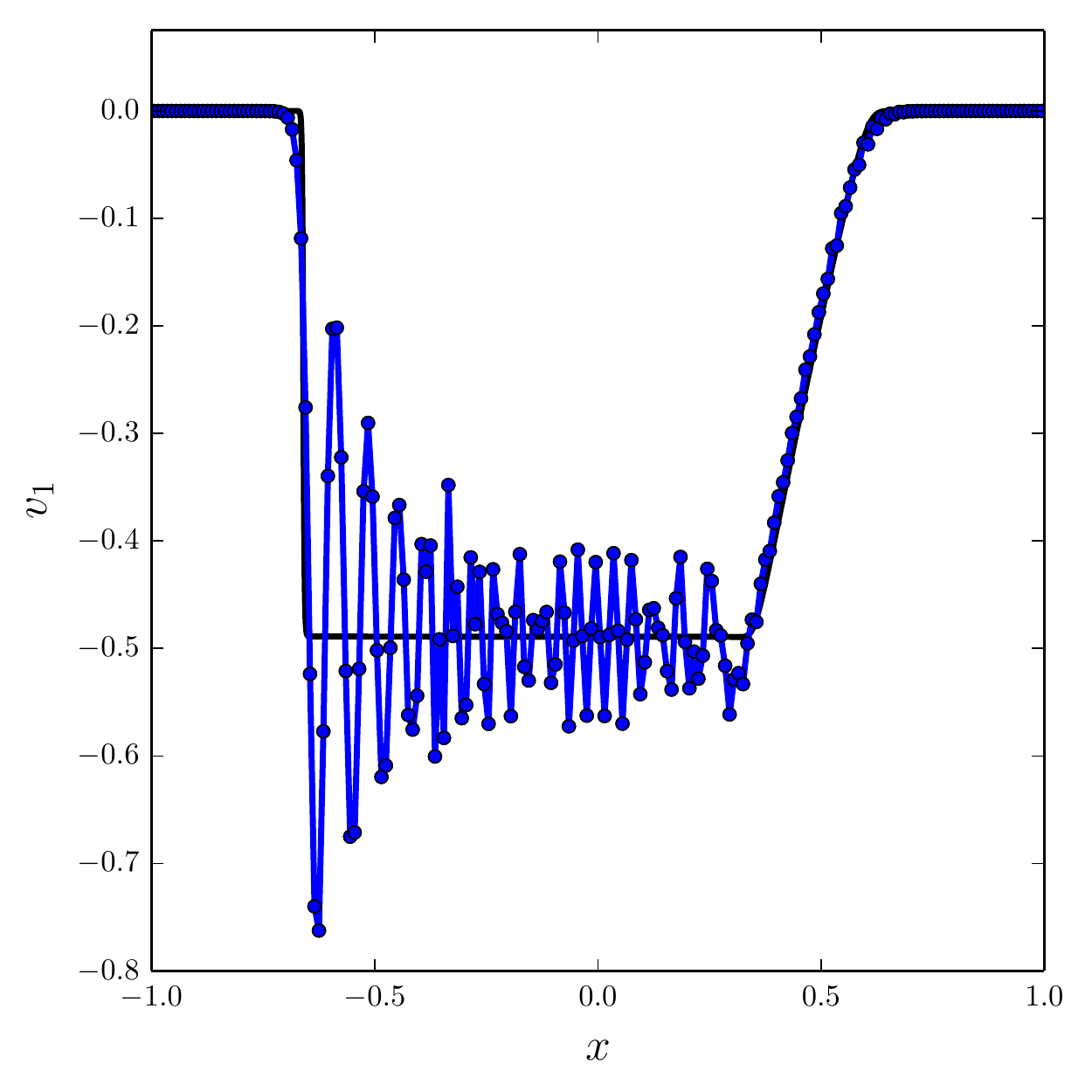}
}
{
\includegraphics[scale=0.575]{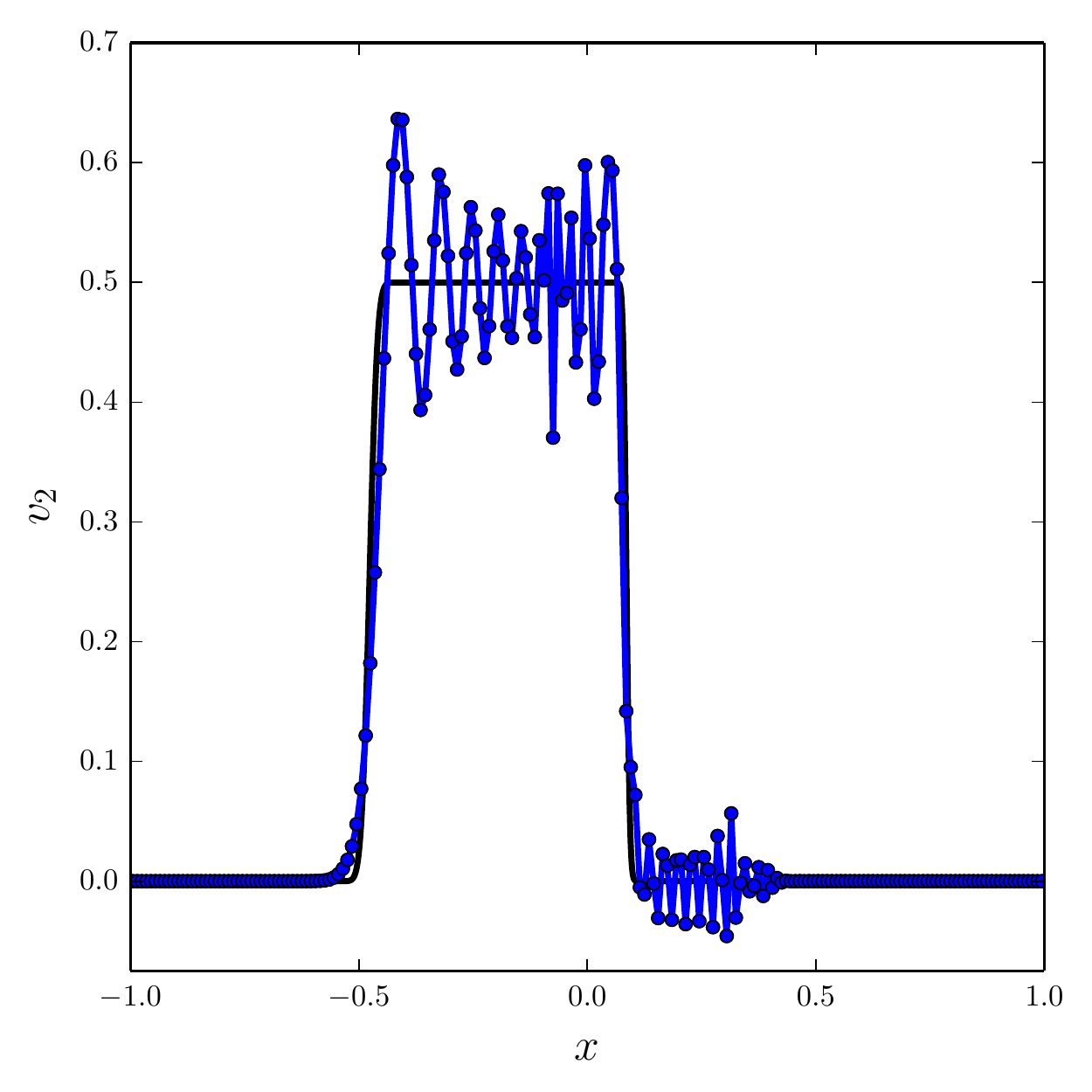}
}
{
\includegraphics[scale=0.575]{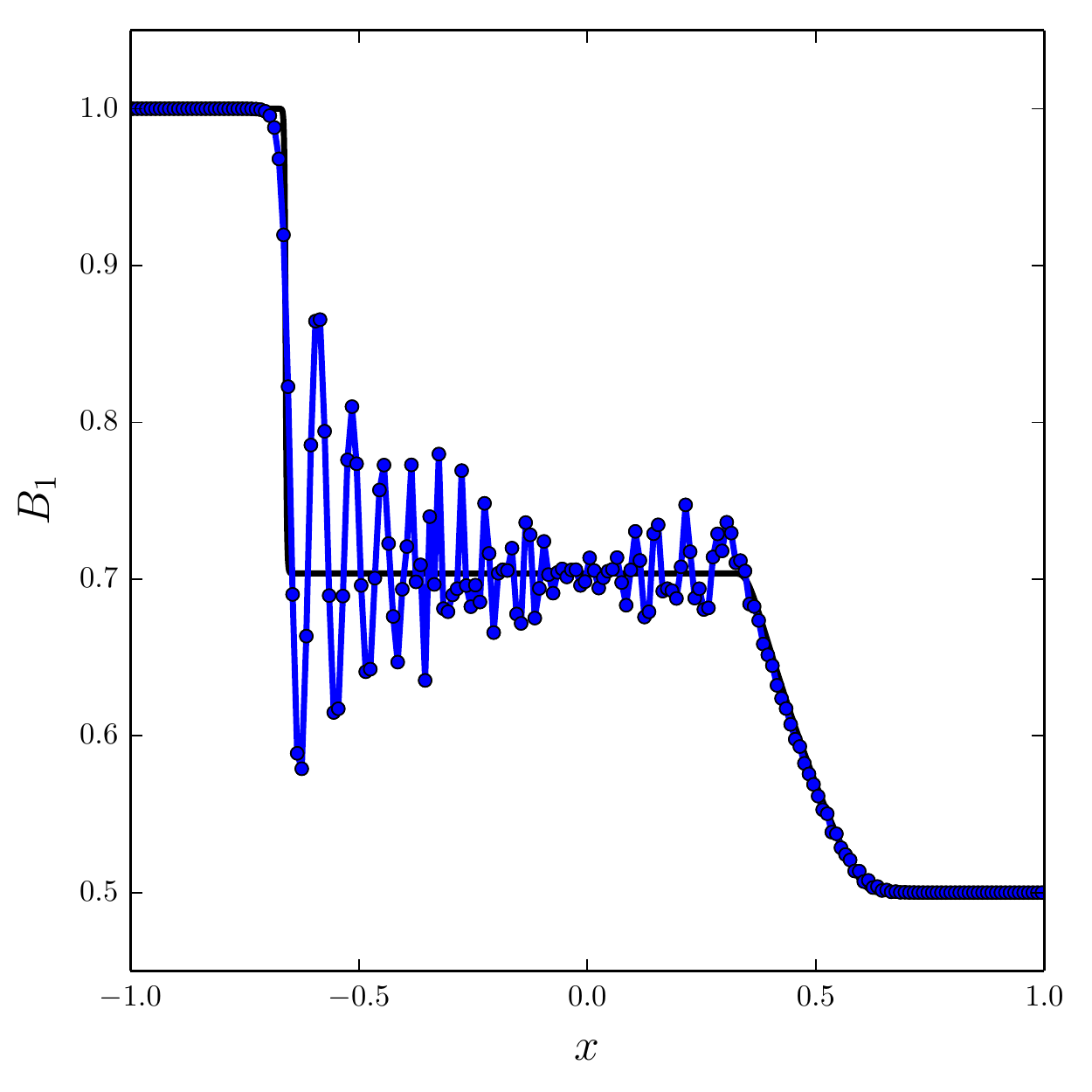}
}
{
\includegraphics[scale=0.575]{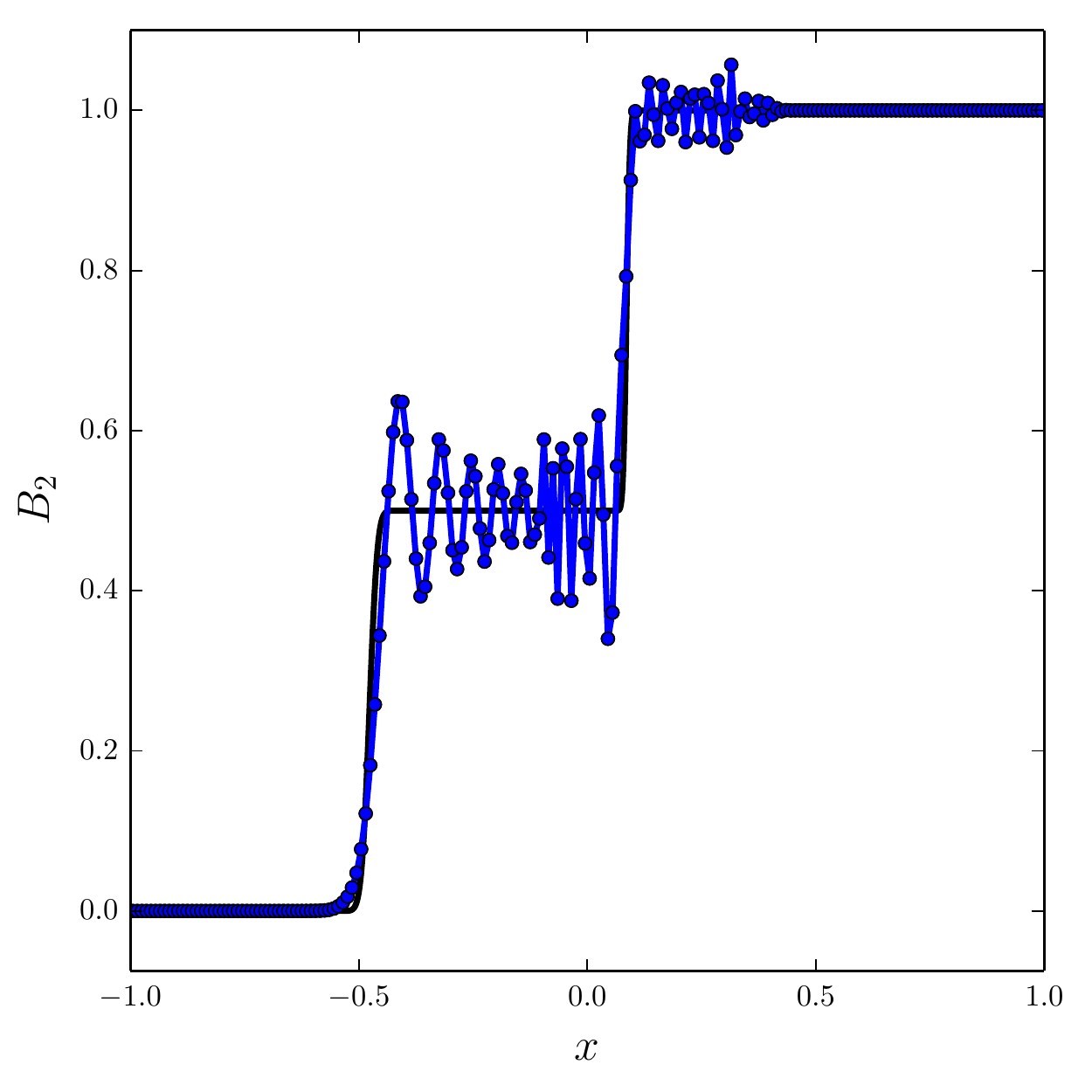}
}
\caption{In blue we present the entropy conserving approximations of the quantities $h$, $v_1$, $v_2$, $B_1$, and $B_2$ at T = 0.4. Solid black represents the reference solution.}
\label{fig:EC}
\end{center}
\end{figure}

\subsection{Entropy Stable Riemann Problem}\label{ESRiemann}

Next, we use the ES1 and ES2 schemes to compute the solution of \eqref{StrongRiemann} with inflow/outflow type boundary conditions. Again, the computation is performed on 100 regular grid cells with CFL number 0.1 up to a final time of $T=0.4$ with a reference solution created using a high-resolution run of the ES1 scheme on 5000 grid cells. We note that the same reference solution could be obtained from a high-resolution computation of the ES2 scheme. We present the solution computed with the entropy stable methods against the reference solution in Fig. \ref{fig:ES}. 

Our computed results for the computed fluid height $h$, velocities $v_1$, $v_2$, and magnetic field $B_1$, $B_2$ shown in Fig. \ref{fig:ES} compare well to the literature \cite{desterck2001,rossmanith2002}. We see that, as was discussed in Sec. \ref{Sec:StableFlux}, the Lax-Friedrichs type stabilization introduces more dissipation into the approximation.
\begin{figure}[!ht]
\begin{center}
{
\includegraphics[scale=0.575]{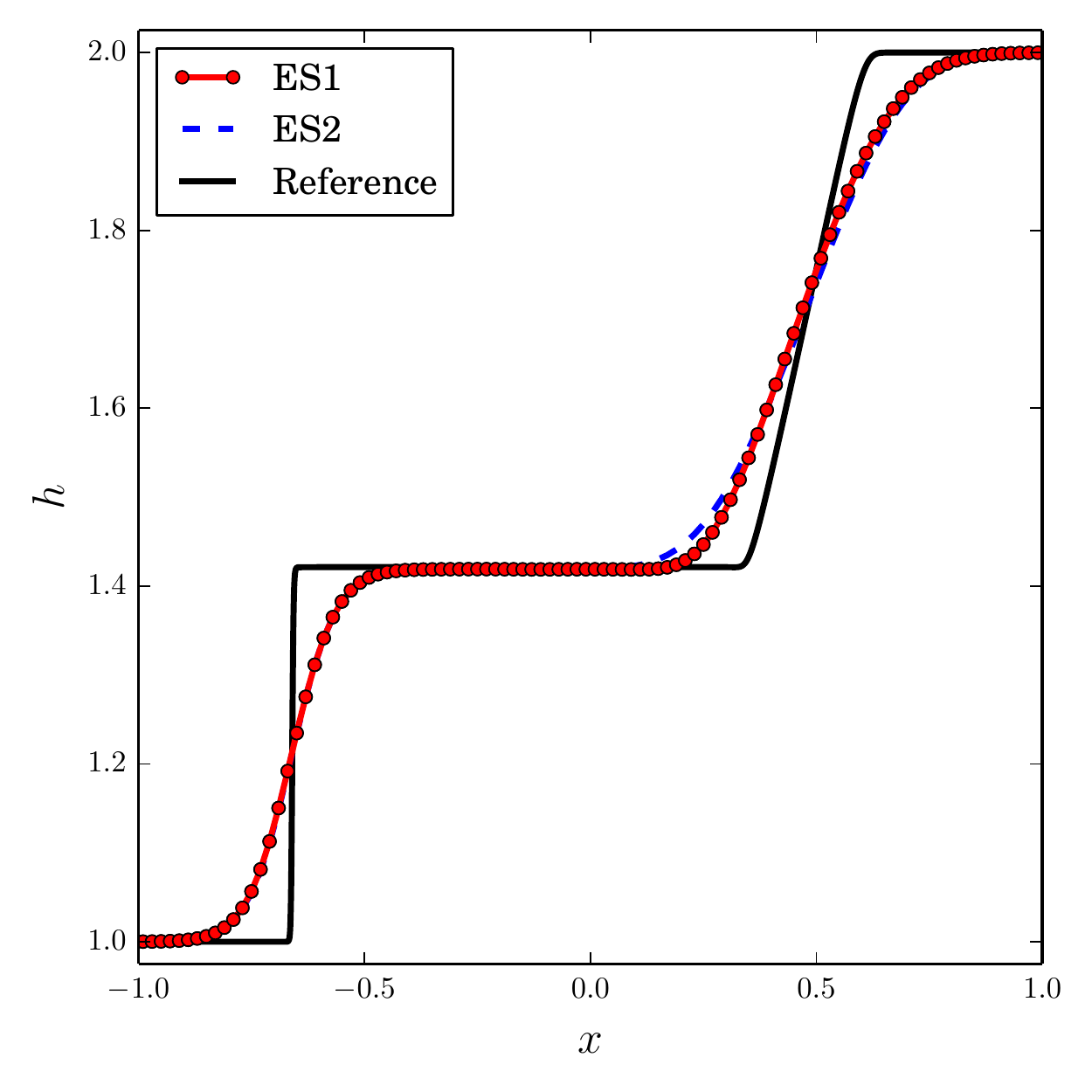}
}
\\
{
\includegraphics[scale=0.575]{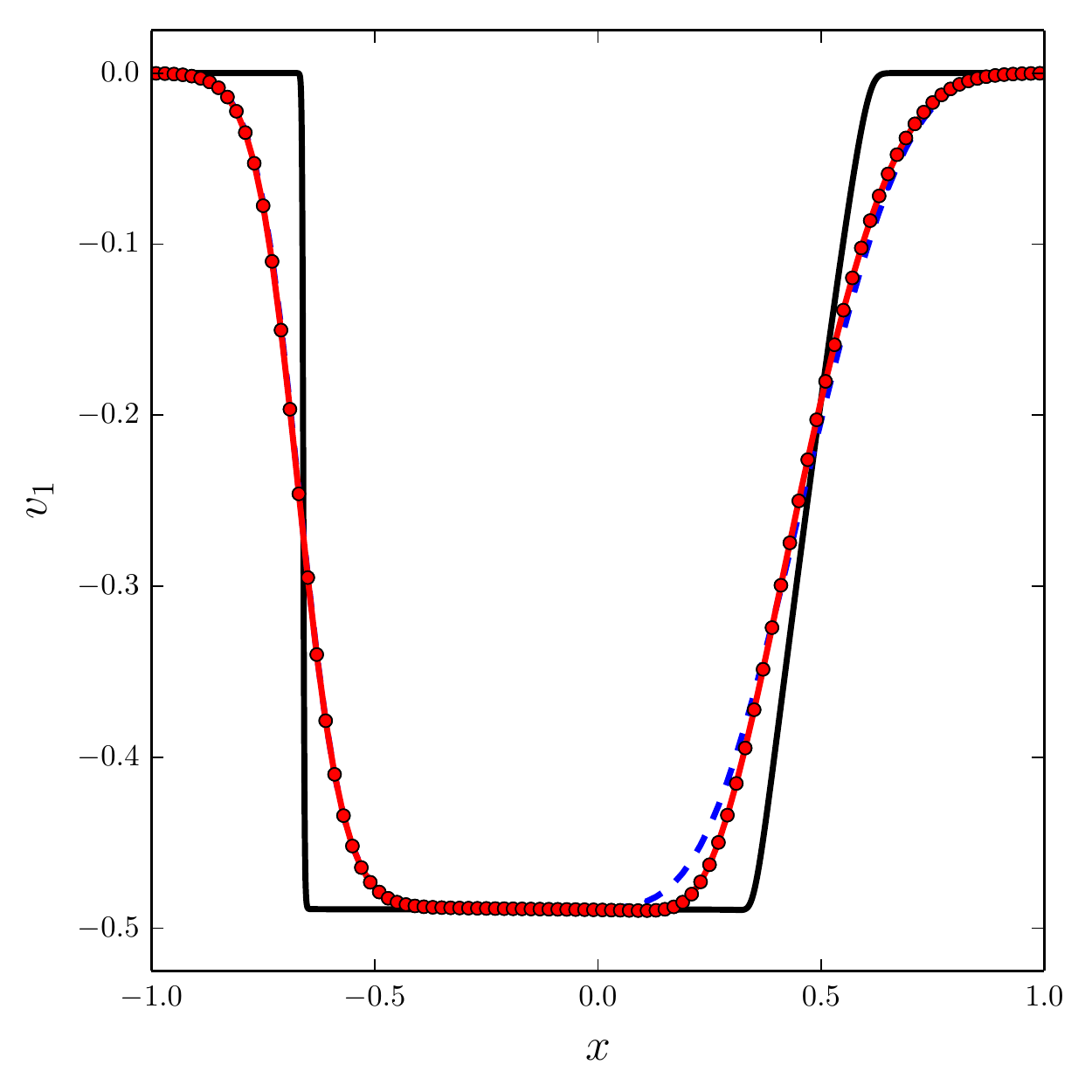}
}
{
\includegraphics[scale=0.575]{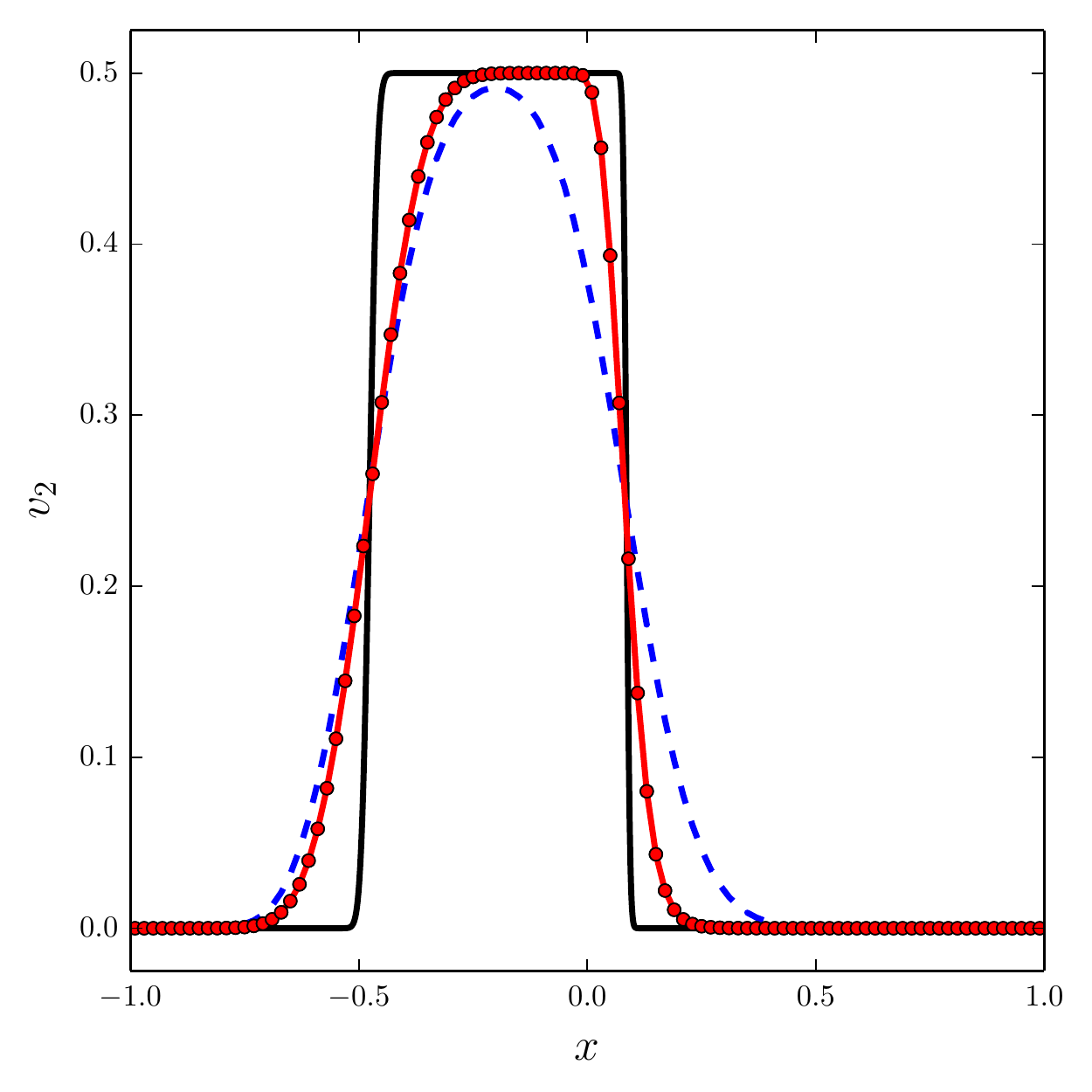}
}
{
\includegraphics[scale=0.575]{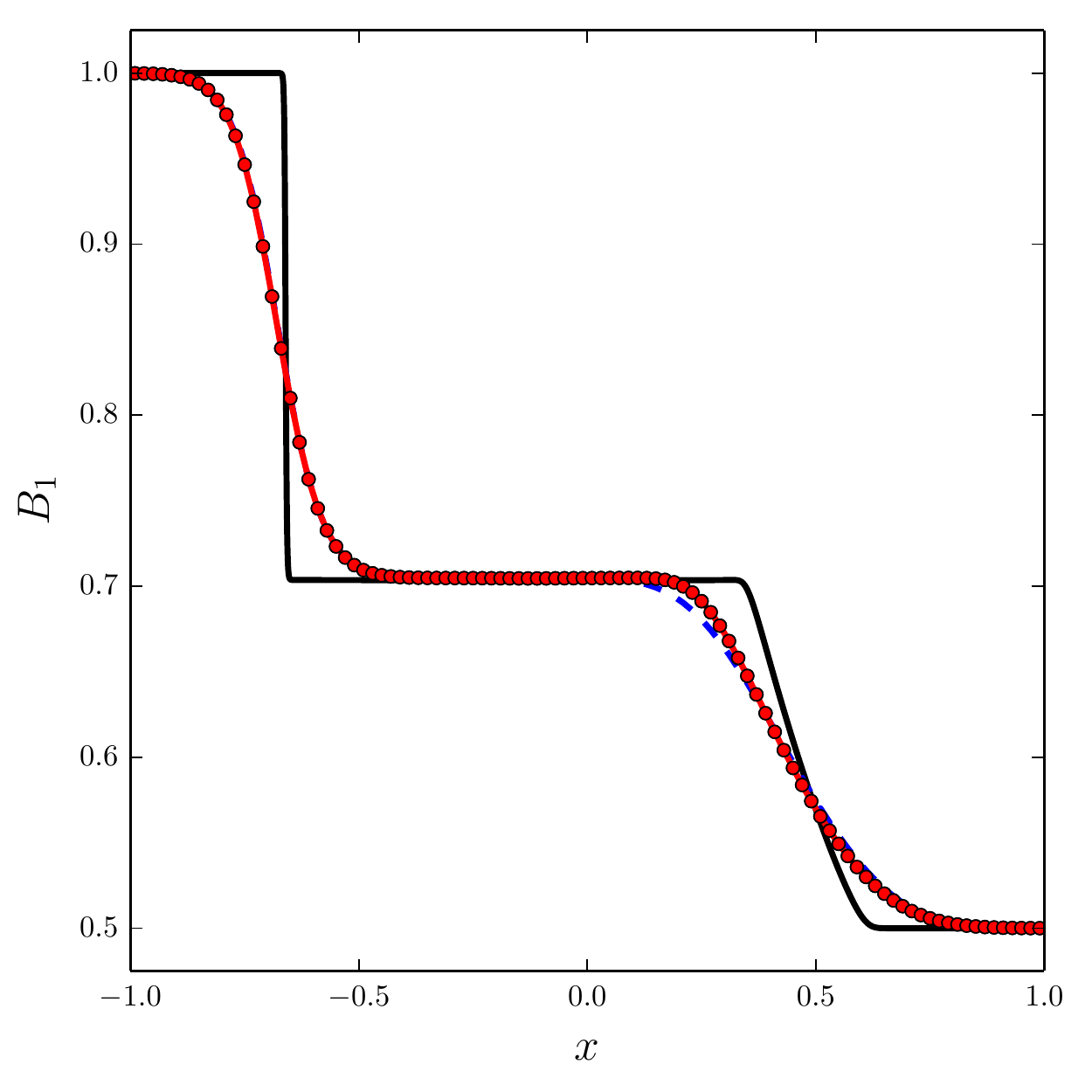}
}
{
\includegraphics[scale=0.575]{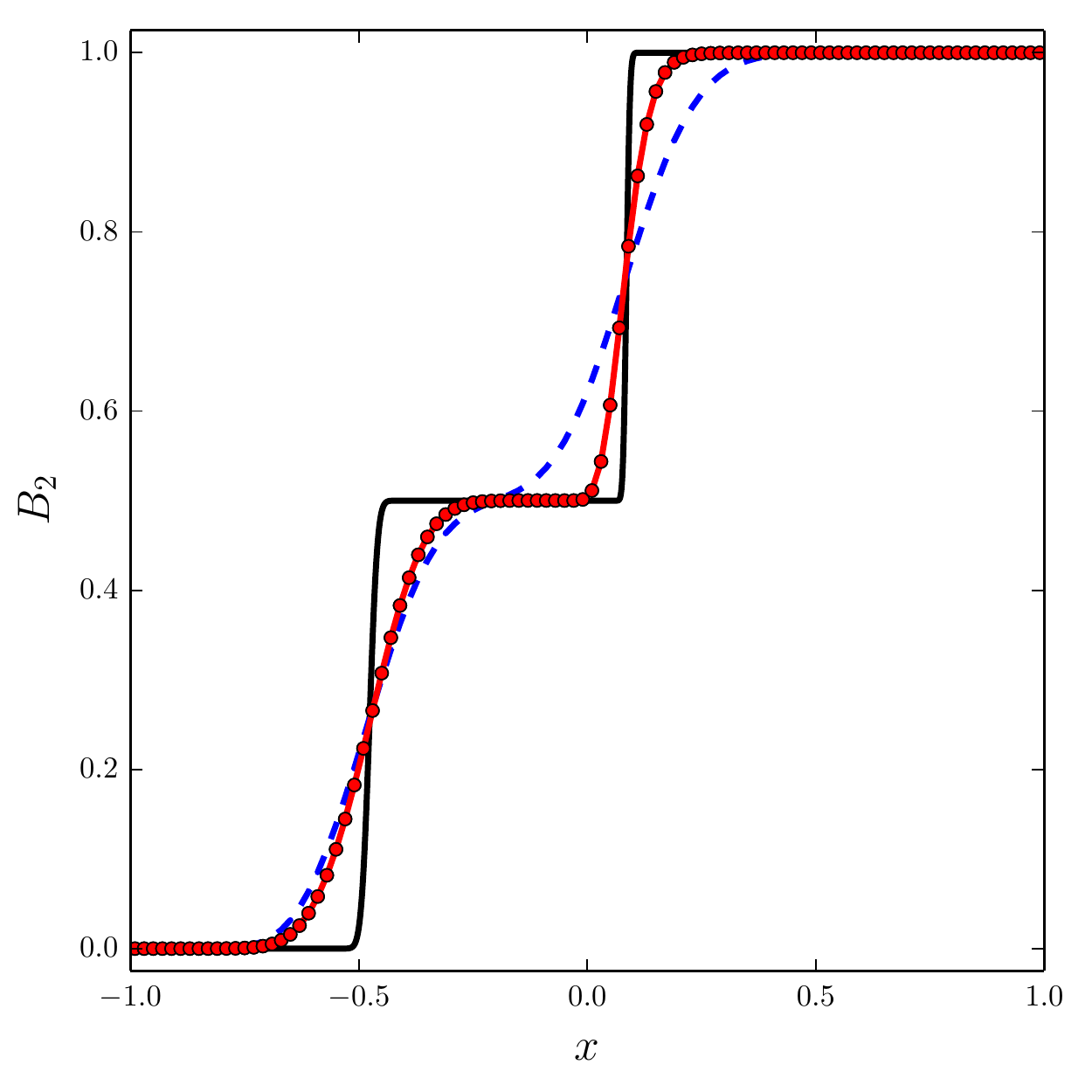}
}
\caption{The entropy stable approximations of the quantities $h$, $v_1$, $v_2$, $B_1$, and $B_2$ at T = 0.4. Solid black is the reference solution, dashed blue is the ES2 scheme, and red with knots is the ES1 scheme.}
\label{fig:ES}
\end{center}
\end{figure}

{\color{black}{As a final test we compare the ES1 scheme to a standard Roe scheme. The development of Roe type schemes for the SWMHD equations has been considered by several authors and complete details can be found in \cite{desterck2001,kemm2014,rossmanith2002}. The comparative computation used 100 regular grid cells with  a CFL number of 0.1 up integrated to $T=0.4$.  A reference solution was created from a high-resolution run of the ES1 scheme on 5000 grid cells. The computed results of the schemes are presented in Fig. \ref{fig:RoeCompare}. We see that for the same number of degrees of freedom the ES1 scheme is less dissipative than the Roe scheme. This is particularly evident in the approximation of quantities in the tangential direction, $v_2$ and $B_2$. We also note that the Roe scheme is more dissipative than the ES2 Lax-Friedrichs type entropy stable approximation which can be seen from a comparison of the results in Fig. \ref{fig:ES} and Fig. \ref{fig:RoeCompare}.}}
\begin{figure}[!ht]
\begin{center}
{
\includegraphics[scale=0.575]{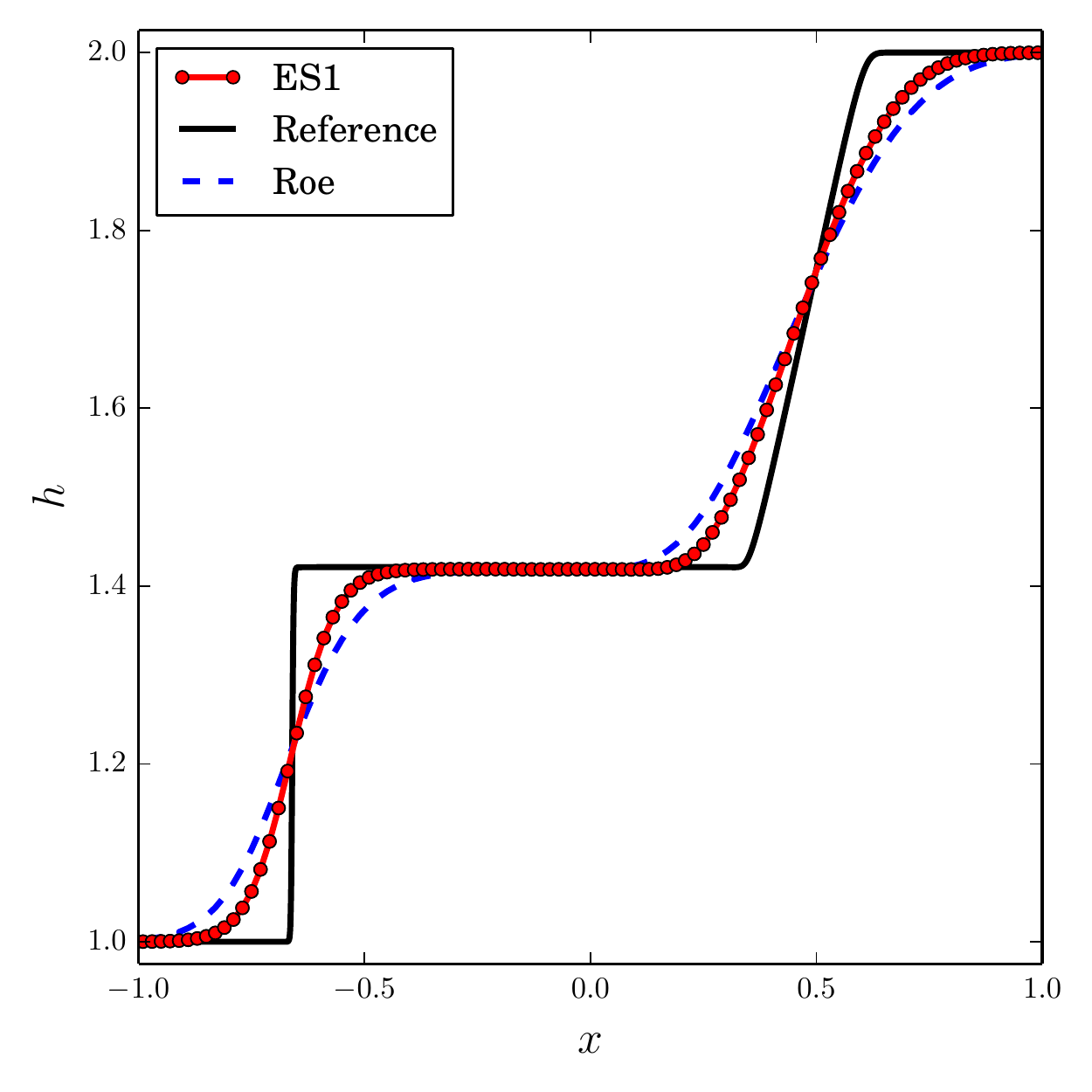}
}
\\
{
\includegraphics[scale=0.575]{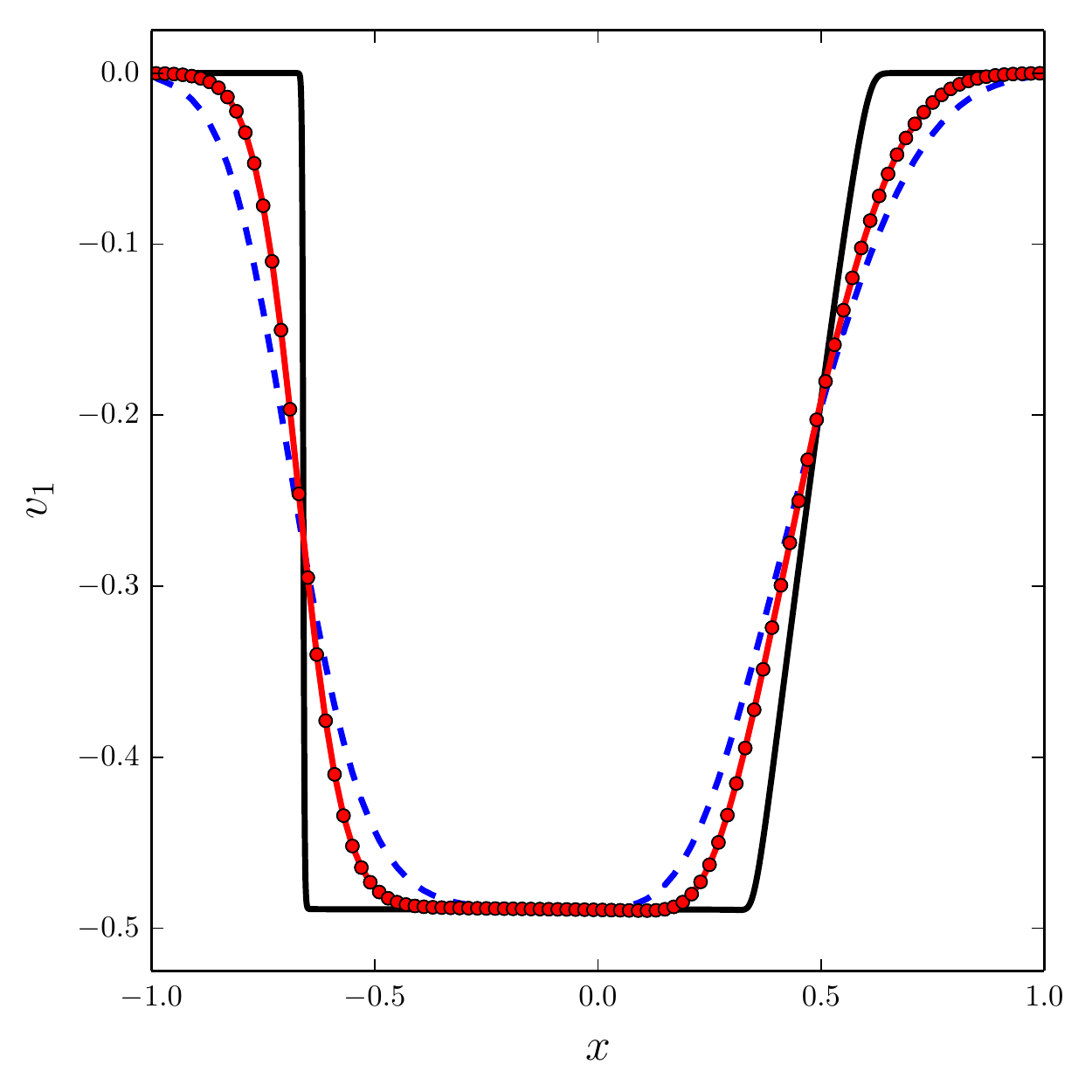}
}
{
\includegraphics[scale=0.575]{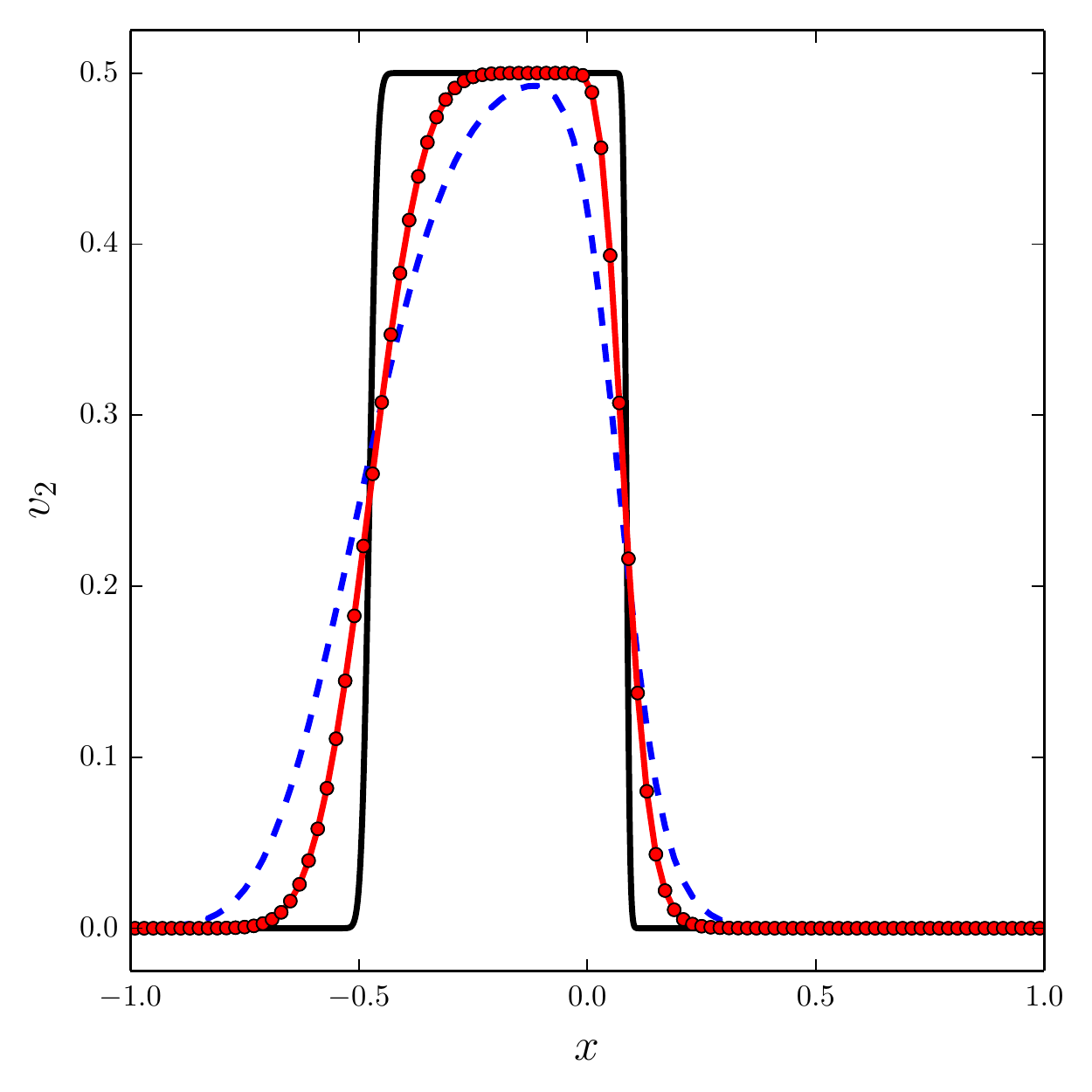}
}
{
\includegraphics[scale=0.575]{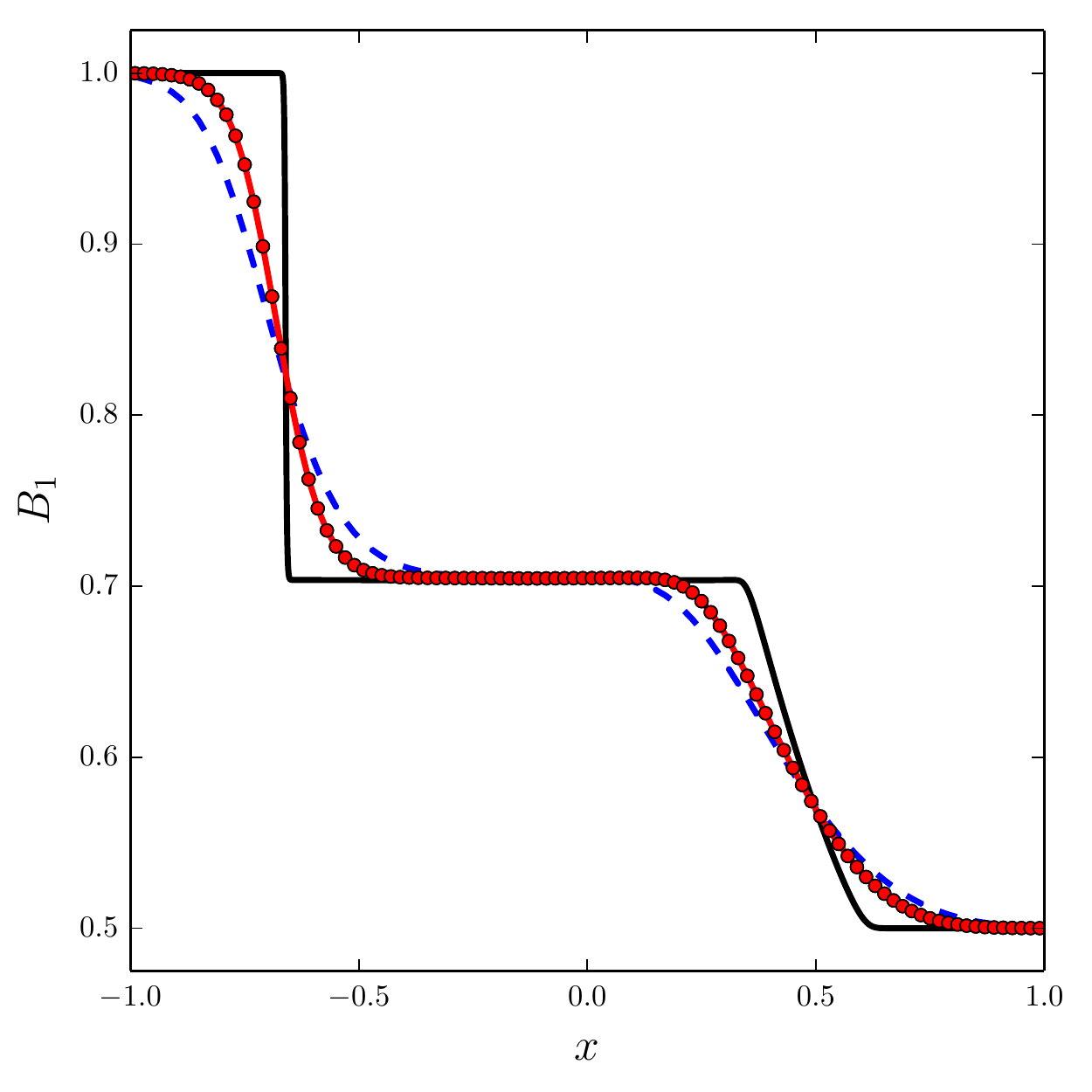}
}
{
\includegraphics[scale=0.575]{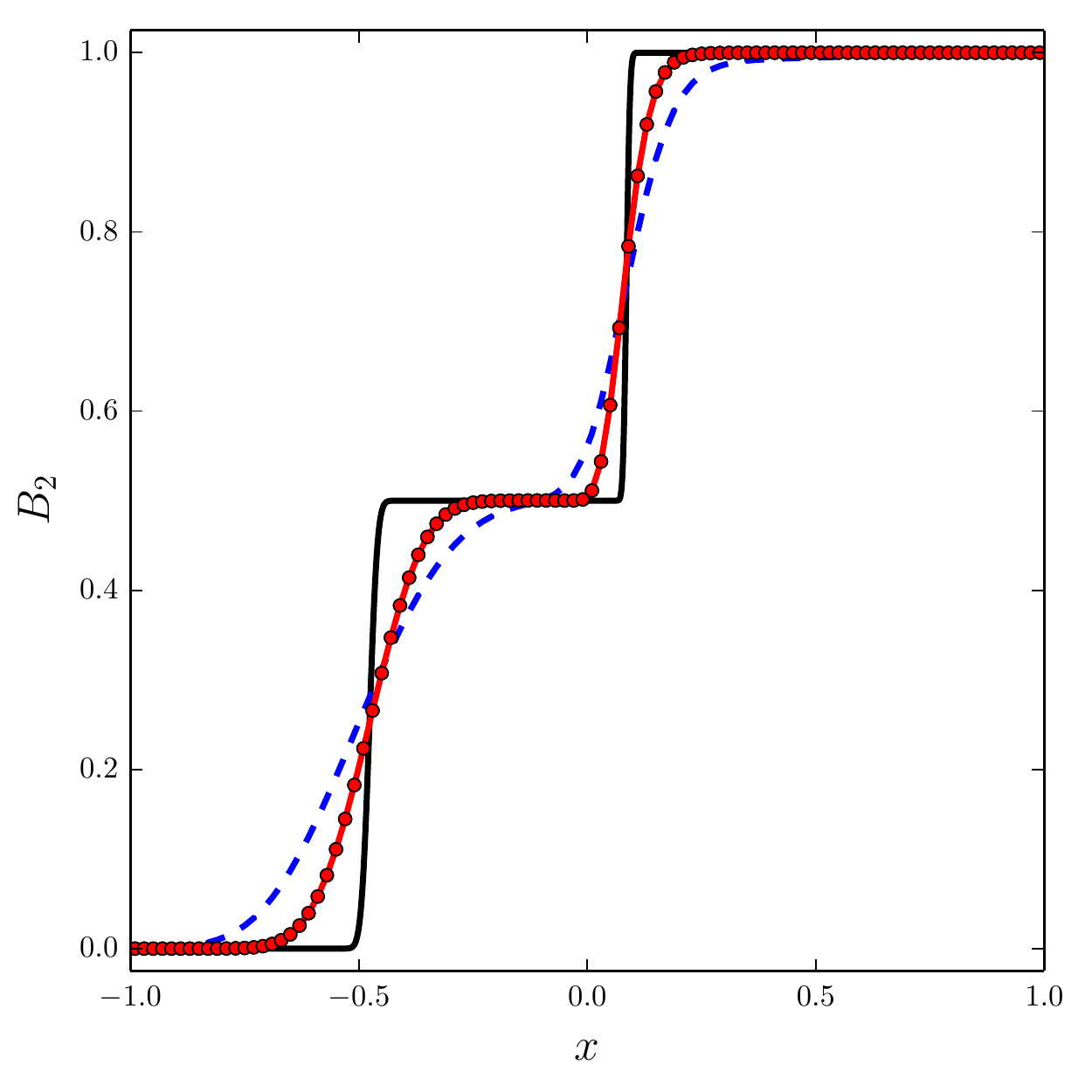}
}
\caption{The entropy stable results compared against a Roe scheme for the quantities $h$, $v_1$, $v_2$, $B_1$, and $B_2$ at T = 0.4. Solid black is the reference solution, dashed blue is the Roe scheme, and red with knots is the ES1 scheme.}
\label{fig:RoeCompare}
\end{center}
\end{figure}

\section{Concluding Remarks}\label{conclusion}

In this work we present a novel, affordable, entropy stable flux for the shallow water MHD equations. The derivations required us to relax the involution condition such that $\partial_x(hB_1)\approx 0$ within finite precision error. Under this less restrictive assumption we were able to derive an entropy conserving flux, denoted $\vec{f}^{*,ec}$. Special care had to be taken for the discretization of the source term in order to guarantee discrete entropy conservation. Because entropy conserving approximations can suffer breakdown at shocks we extended our analysis and derived two stabilizing dissipation terms that we apply to the entropy conserving flux. We finally used a variety of numerical test examples to demonstrate and underline the theoretical findings.

{\color{black}{The derivation of the entropy conserving and entropy stable numerical fluxes in this paper focused on the one-dimensional SWMHD equations. The restriction to one spatial dimension was because the analysis proved to be quite involved. However, the discussion was general, so the derivations in this paper readily extend to provably entropy conserving and entropy stable approximations of the SWMHD equations on multi-dimensional Cartesian grids. In the Appendix we provide details on the multidimensional flux functions.

It is well known that the issue of divergence-free constraint and the accuracy of an approximation is more serious in two dimensions. As was discussed in Sec. \ref{GoverningEqns} the Janhunen source term acts, analogously, to a hyperbolic divergence cleaning method where the error in the involution term is radiated out of the computational domain. To verify the utility of our method we consider a two dimensional example similar to the ``rotor problem'' of Balsara and Spicer \cite{balsara1999} and considered by T\'{o}th \cite{toth2000} for the ideal MHD equations. The computational domain is $\Omega = [-1,1]\times[-1,1]$ with initial data
\begin{equation}\label{Rotor}
\begin{bmatrix}h\\v_1\\v_2\\B_1\\B_2\end{bmatrix} = \left\{
\begin{array}{lc}
\left[10,-y,x,0.1,0\right]^T, & \textrm{if}\quad \|\vec{x}\|_2 < 0.1, \\ 
\left[1,0,0,1,0\right]^T, & \textrm{if}\quad \|\vec{x}\|_2 > 0.1,
\end{array}
\right.
\end{equation}
integrated up to the final time $T=0.2$. The computational results of the two dimensional ES1 scheme with 200 regular grid cells in each direction are presented in Fig. \ref{fig:2D}. We note that the computed fluid height $h$ and velocity components $v_1$ and $v_2$ compare well with previous results found in the literature \cite{kroeger2005}. As we expect, because we use a different divergence cleaning procedure, our results for the $B_1$ and $B_2$ magnetic fields differ slightly from those computed in \cite{kroeger2005}. 

In this paper we have demonstrated that the entropy stable approximations for the SWMHD equations are competitive with existing solvers. We also showed that the methods readily extend to multiple dimensions because the source term we need for entropy conservation also acts as a divergence cleaning technique.}}
\begin{figure}[!ht]
\begin{center}
{
\includegraphics[scale=0.56]{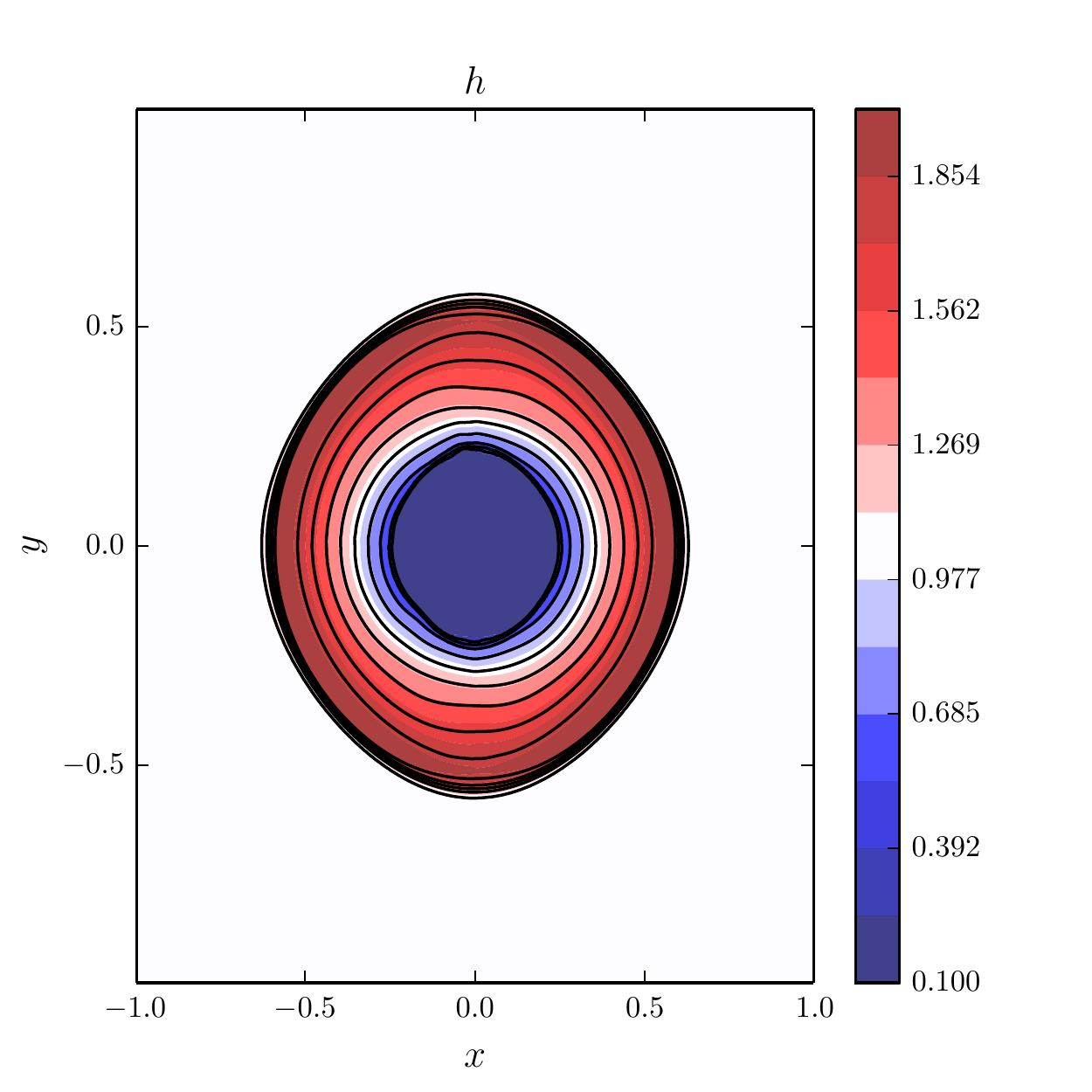}
}
\\
{
\includegraphics[scale=0.56]{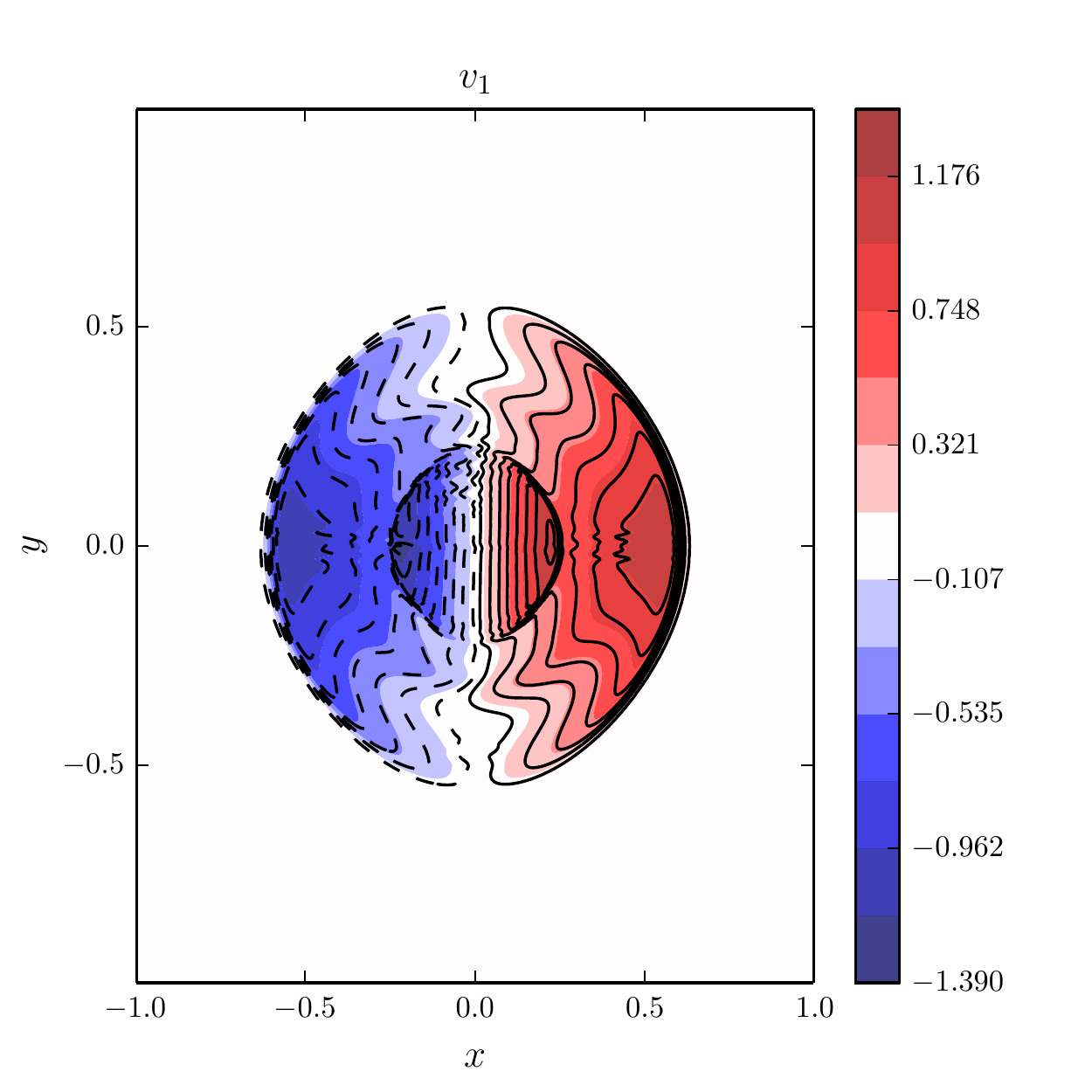}
}
{
\includegraphics[scale=0.56]{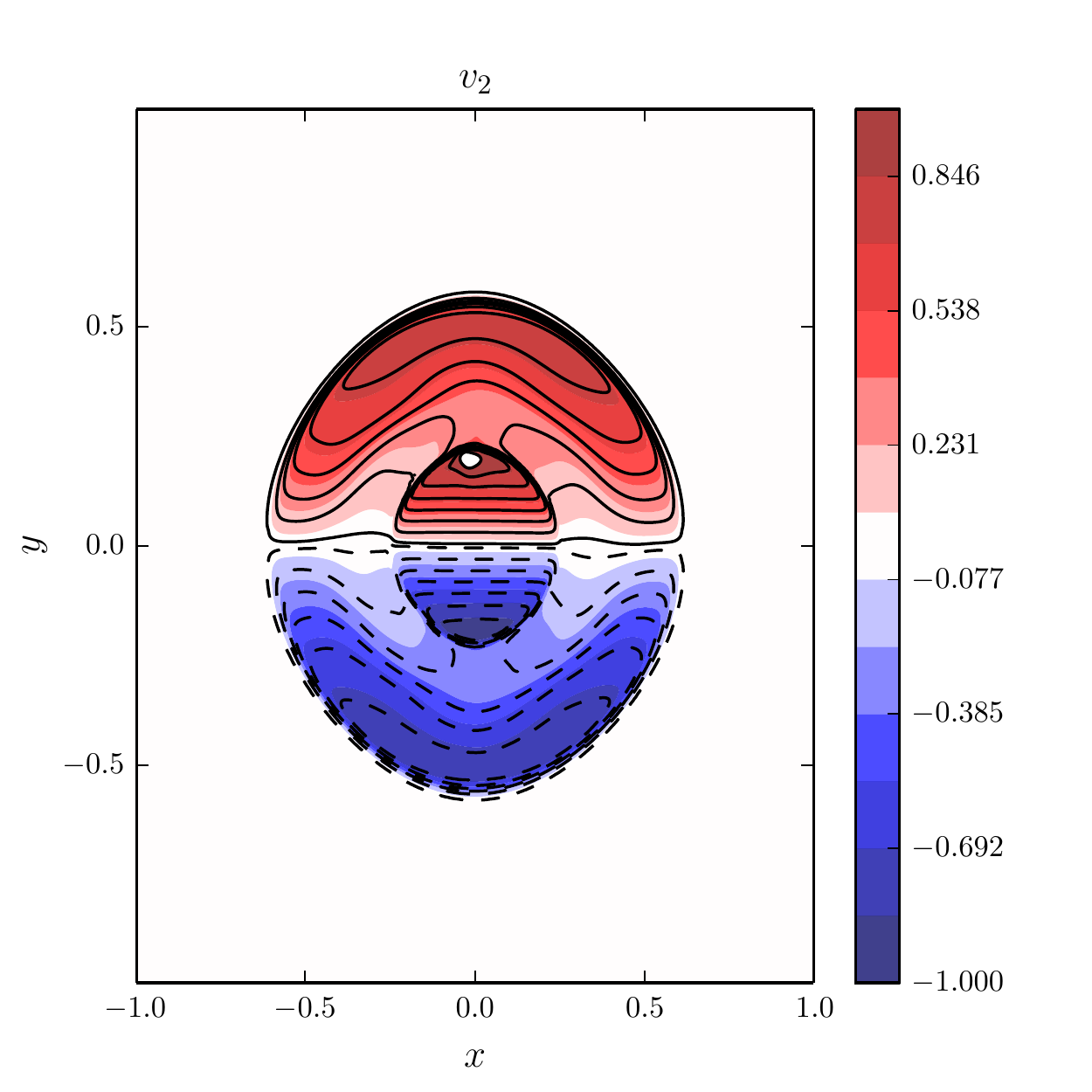}
}
{
\includegraphics[scale=0.56]{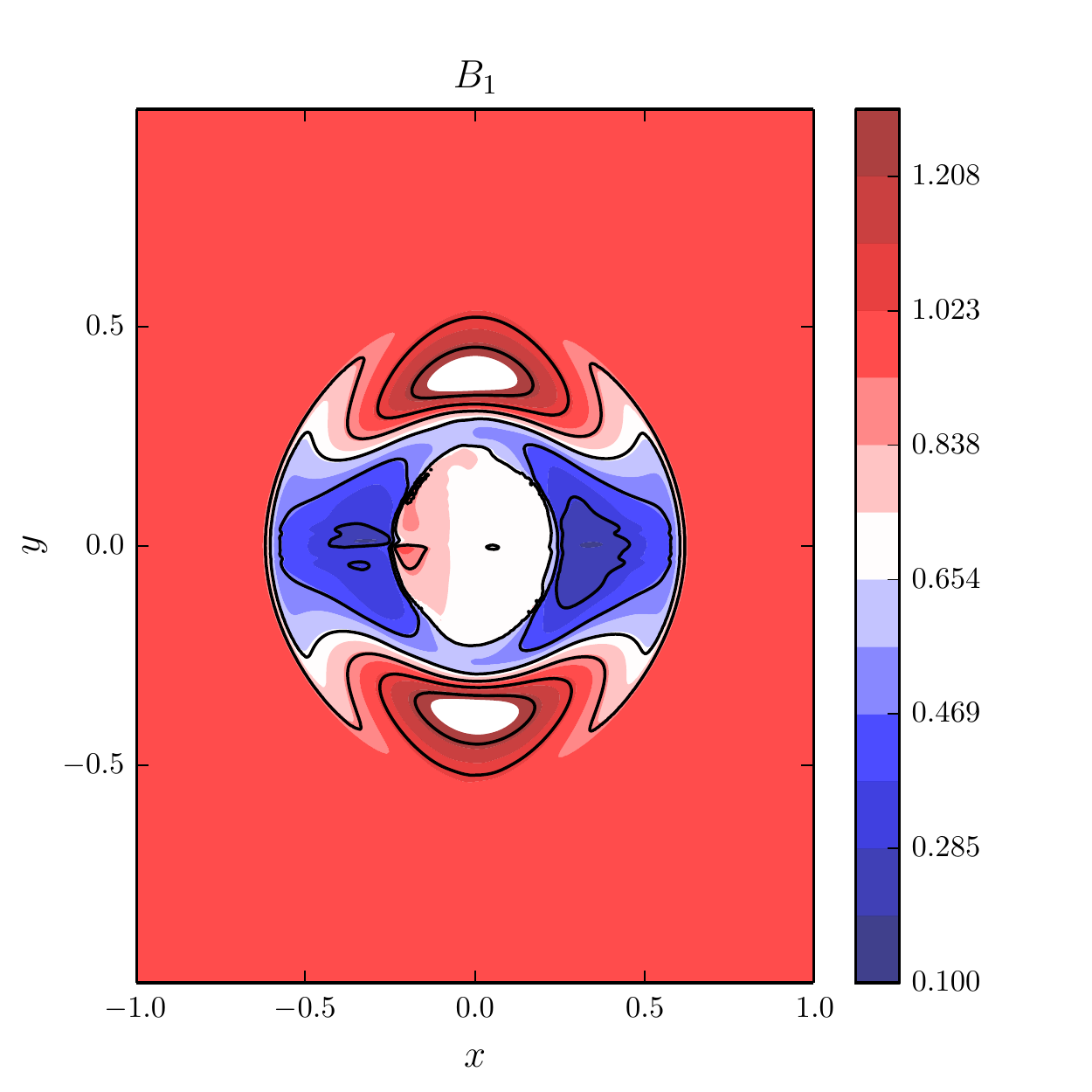}
}
{
\includegraphics[scale=0.56]{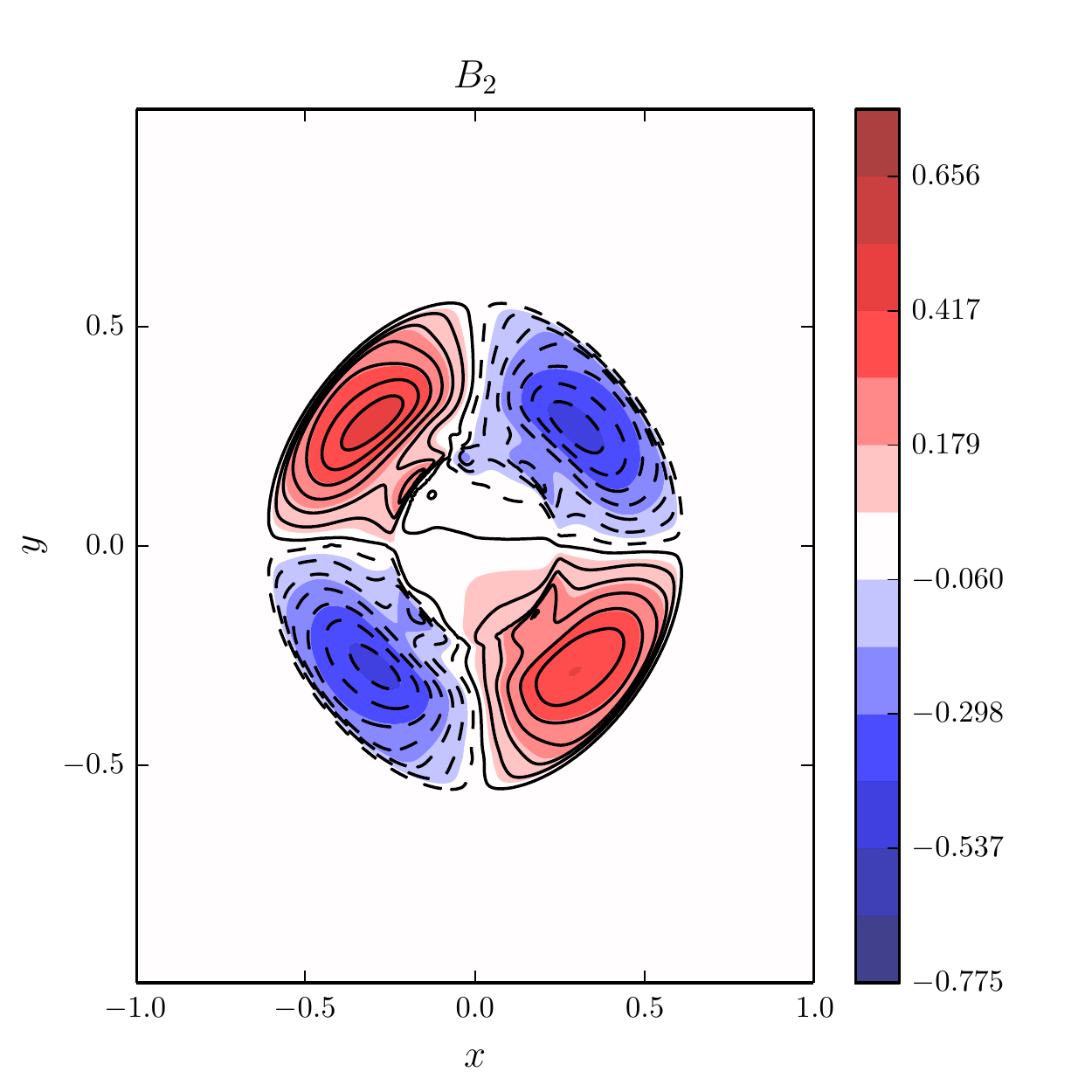}
}
\caption{The computed solution to the ``rotor problem'' \eqref{Rotor} using the ES1 scheme for the quantities $h$, $v_1$, $v_2$, $B_1$, and $B_2$ at T = 0.2. Dashed lines are negative contour lines.}
\label{fig:2D}
\end{center}
\end{figure}

\appendix
{\color{black}{\section{Flux Functions in Two Spatial Dimensions}\label{2DFluxes}

We note that, for the discussion of the two dimension fluxes we suppress the $i$ index on the multidimensional approximation for the purpose of clarity. 

\subsection{Entropy Conservative Flux}

To develop the entropy conservative flux in the $y-$direction we first note that the entropy potential
\begin{equation}\label{EntropyPotentialY}
\psi_y = \vec{v}\cdot\vec{g} - G = \frac{g}{2}h^2v_2 - hB_2(v_1B_1 + v_2B_2),
\end{equation}
where we have the entropy flux in the $y-$direction
\begin{equation}
G = gh^2v_2 + \frac{1}{2}\left(hv_1^2v_2 + hv_2^3 + hv_2B_1^2 - hv_2B_2^2\right) - hv_1B_1B_2.
\end{equation}
Note that the discrete entropy conservation condition \eqref{entropyConservationCondition1} has the same structure in each Cartesian direction. Lastly, the source term contributes symmetrically to each direction. With a proof analogous of that for $\vec{f}^{*,ec}$ in Sec. \ref{EntropyFlux} we present the entropy conserving numerical flux for the $y-$direction.
\begin{cor}(Entropy Conserving Numerical Flux: $y-$direction)
If we discretize the source term in the finite volume method to contribute to each element as 
\begin{equation}
\vec{s}_j = \frac{1}{2}\left(\vec{s}_{j+\tfrac{1}{2}} + \vec{s}_{j-\tfrac{1}{2}}\right) = -\frac{1}{2}\left(
\jump{hB_2}_{j+\tfrac{1}{2}}
\begin{bmatrix}
0\\
0\\
0\\
\frac{\average{v_1B_1}}{\average{\Delta x B_1}}\\[0.1cm]
\frac{\average{v_2B_2}}{\average{\Delta x B_2}}
\end{bmatrix}_{j+\tfrac{1}{2}}
+
\jump{hB_2}_{j-\tfrac{1}{2}}
\begin{bmatrix}
0\\
0\\
0\\
\frac{\average{v_1B_1}}{\average{\Delta x B_1}}\\[0.1cm]
\frac{\average{v_2B_2}}{\average{\Delta x B_2}}
\end{bmatrix}_{j-\tfrac{1}{2}}
\right),
\end{equation}
then we can determine a discrete, entropy conservative flux of the form
\begin{equation}
\vec{g}^{*,ec} = \begin{bmatrix}
\average{h}\average{v_2} \\[0.1cm]
\average{h}\average{v_1}\average{v_2} - \average{hB_2}\average{B_1}\\[0.1cm]
\average{h}\average{v_2}^2 + \frac{1}{2}g\left\{\!\!\!\left\{h^2\right\}\!\!\!\right\} - \average{hB_2}\average{B_2} \\[0.1cm]
\average{h}\average{v_2}\average{B_1}-\average{hB_2}\average{v_1} \\[0.1cm]
\average{h}\average{v_2}\average{B_2}-\average{hB_2}\average{v_2}
\end{bmatrix}.
\end{equation}
\end{cor}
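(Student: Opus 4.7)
The plan is to mirror the proof of Theorem 1 step by step, replacing every role played by $v_1$, $B_1$, and $f^*$ with $v_2$, $B_2$, and $g^*$ respectively, and replacing the entropy potential $\psi$ by $\psi_y$ from \eqref{EntropyPotentialY}. First I would write down the discrete entropy conservation condition in the $y$-direction, which has exactly the form of \eqref{entropyConservationCondition2} but with $\jump{F}$ replaced by $\jump{G}$ and $\vec{f}^*$ replaced by $\vec{g}^*$. Substituting $\psi_y = \frac{1}{2}gh^2 v_2 - hB_2(v_1B_1 + v_2B_2)$ into that constraint yields the componentwise entropy condition to be solved for the five unknown flux components.

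Next I would expand this scalar equation into linear jumps of the primitive variables $h, v_1, v_2, B_1, B_2$, repeatedly using the identities $\jump{ab} = \average{a}\jump{b} + \average{b}\jump{a}$ and $\jump{a^2} = 2\average{a}\jump{a}$. Just as in the proof of Theorem 1, the only subtle step is the expansion of the magnetic product $\jump{hB_2(v_1B_1 + v_2B_2)}$: splitting everything down to primitive jumps would force averages of $\vec{B}$ to multiply $\jump{h}$, which is inconsistent with the mass equation. The remedy is to keep $hB_2$ grouped as a single jump, writing
\begin{equation*}
\jump{hB_2(v_1B_1+v_2B_2)} = \big(\average{v_1B_1}+\average{v_2B_2}\big)\jump{hB_2} + \average{hB_2}\big(\average{v_1}\jump{B_1}+\average{B_1}\jump{v_1}+\average{v_2}\jump{B_2}+\average{B_2}\jump{v_2}\big).
\end{equation*}

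The main obstacle, and the main reason this is a corollary rather than a pure symmetry observation, is to verify that the proposed source term discretization cancels the residual $\jump{hB_2}$ contribution. I would compute $-2\average{\Delta y\vec{q}}^T \vec{s}_{j+1/2}$ with $\vec{s}_{j+1/2}$ as given in the statement and check that it equals $(\average{v_1B_1}+\average{v_2B_2})\jump{hB_2}$, exactly as happens at the $i+\tfrac{1}{2}$ interface in Theorem 1; this works because the last two entries of $\vec{q}$ are $B_1$ and $B_2$, and the prescribed denominators $\average{\Delta x\, B_1}$ and $\average{\Delta x\, B_2}$ telescope against the numerators. Consistency of the source term in the sense of \eqref{SWMHD1DForced} follows from $\jump{hB_2}/\Delta y \approx \partial_y(hB_2)$ on a regular grid, matching the Janhunen source for the $y$-direction.

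Once the $\jump{hB_2}$ term is eliminated, the remaining constraint depends only on the five linear jumps $\jump{h}, \jump{v_1}, \jump{v_2}, \jump{B_1}, \jump{B_2}$. Matching coefficients gives a triangular linear system in $g_1^{*,ec},\dots,g_5^{*,ec}$ of exactly the same structure as \eqref{eq:fluxAlmostThere}, with the roles of direction $1$ and direction $2$ interchanged. Solving it top-down (first $g_1^{*,ec}=\average{h}\average{v_2}$, then the momentum and induction components) yields the stated flux $\vec{g}^{*,ec}$. Finally I would note that consistency \eqref{consistency} is immediate by inspection when both states coincide, completing the proof.
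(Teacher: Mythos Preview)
Your proposal is correct and is exactly what the paper intends: it states the corollary with the remark that the proof is analogous to that of Theorem~1, and your plan mirrors that proof step by step with the roles of the $x$- and $y$-direction quantities interchanged. The key observations you identify---keeping $hB_2$ grouped so that the source term can cancel the $\jump{hB_2}$ contribution, and then matching coefficients of the primitive jumps---are precisely the same as in the $x$-direction argument.
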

\subsection{Entropy Stable Fluxes}
Just as was done in Sec. \ref{Sec:StableFlux} we can create 2D entropy stable flux functions. We, again, use the flux Jacobian altered by the Powell source term to create the dissipation term:
\begin{equation}\label{alteredFluxJacobianY}
\doublehat{\matrix{B}} = \begin{bmatrix} 
0 & 0 & 1 & 0 & 0  \\ 
-v_1v_2+B_1B_2 & v_2 & v_1 & -B_2 & 0 \\
gh - v_2^2 + B_2^2 & 0 & 2v_2 & 0 & -B_2 \\
v_1B_2 - v_2B_1 & -B_2 & B_1 & v_2 & 0 \\
0 & 0 & 0 & 0 & v_2 \\
\end{bmatrix},
\end{equation}
equipped with a full set of eigenvalues
\begin{equation}\label{eigenvaluesY}
\lambda_1 = v_2-c_g,\quad\lambda_2 = v_2-B_2,\quad\lambda_3 = v_2,\quad\lambda_4 = v_2+B_2,\quad\lambda_5 = v_2+c_g,
\end{equation}
and right eigenvectors
\begin{equation}\label{rightEVY}
\doublehat{\matrix{R}}_y = \begin{bmatrix}
1 & 0 & 1 & 0 & 1 \\[0.1cm]
v_1 & 1 & v_1 & 1 & v_1\\[0.1cm]
v_2-c_g & 0 & v_2 & 0 & v_2+c_g \\[0.1cm]
B_1 & 1 & B_1 & -1 & B_1 \\[0.1cm]
0    & 0 & \frac{c_g^2}{B_2} & 0 & 0 
\end{bmatrix},
\quad
\doublehat{\matrix{L}}_y = \doublehat{\matrix{R}}_y^{-1},
\end{equation}
where $c_g^2 = gh+B_2^2$ is the magnetogravity wave speed.
\begin{cor} (Entropy Stable 1 (ES1): $y-$direction) If we apply the diagonal scaling matrix
\begin{equation}\label{scalingMatrixY}
\matrix{T}_y = diag\left(\frac{c}{c_g\sqrt{2g}}\,,\,\frac{c}{\sqrt{2g}}\,,\,\frac{B_2}{c_g\sqrt{g}}\,,\,\frac{c}{\sqrt{2g}}\,,\,\frac{c}{c_g\sqrt{2g}}\right),
\end{equation}
to the matrix of right eigenvectors $\doublehat{\matrix{R}}_y$ \eqref{rightEVY}, then we obtain the Merriam identity \cite{merriam1989} (Eq. 7.3.1 pg. 77) 
\begin{equation}\label{MerriamIdentityY}
\matrix{H} = \widetilde{\matrix{R}}_y\widetilde{\matrix{R}}_y^T = \left(\doublehat{\matrix{R}}_y\matrix{T}_y\right) \left(\doublehat{\matrix{R}}_y\matrix{T}_y\right)^T = \doublehat{\matrix{R}}_y\matrix{S}_y\doublehat{\matrix{R}}_y^T,
\end{equation}
that relates the right eigenvectors of $\doublehat{\matrix{B}}$ to the entropy Jacobian matrix \eqref{entropyJacobian}. For convenience, we introduce the diagonal scaling matrix $\matrix{S}_y=\matrix{T}\,^2\!\!\!\!_y$ in \eqref{MerriamIdentityY}. We then have the guaranteed entropy stable flux interface contribution
\begin{equation}\label{minimalDissY}
\vec{g}^{*,ES1} = \vec{g}^{*,ec}-\frac{1}{2}\matrix{D}\jump{\vec{u}}=\vec{g}^{*,ec} - \frac{1}{2} \doublehat{\matrix{R}}_y|\doublehat{\boldsymbol\Lambda}|\matrix{S}_y\doublehat{\matrix{R}}_y^T\jump{\vec{q}}.
\end{equation}

\end{cor}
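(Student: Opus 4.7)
The plan is to mirror the proof of the analogous one-dimensional theorem (ES1) in Sec. \ref{Sec:StableFlux}, exploiting the structural symmetry between the $x-$ and $y-$direction flux systems. First I would define the dissipation term as
\begin{equation*}
-\tfrac{1}{2}\matrix{D}\jump{\vec{u}} = -\tfrac{1}{2}|\doublehat{\matrix{B}}|\jump{\vec{u}} = -\tfrac{1}{2}\doublehat{\matrix{R}}_y|\doublehat{\boldsymbol\Lambda}|\doublehat{\matrix{L}}_y\jump{\vec{u}},
\end{equation*}
using the $y-$direction Powell-modified flux Jacobian \eqref{alteredFluxJacobianY} together with its eigendecomposition \eqref{eigenvaluesY}--\eqref{rightEVY}.

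Next I would repeat the semi-discrete entropy budget argument that produced \eqref{TotalUpdate2} in the $x-$direction. Because the $y-$direction entropy conservative flux $\vec{g}^{*,ec}$ together with the symmetric source-term discretization from the preceding corollary already guarantee cancellation of the entropy flux difference and the jump-in-$hB_2$ source contribution, the only surviving term is $-\tfrac12\jump{\vec{q}}^T\doublehat{\matrix{R}}_y|\doublehat{\boldsymbol\Lambda}|\doublehat{\matrix{L}}_y\jump{\vec{u}}$. To force this quadratic form to be non-positive I would invoke Barth's eigenvector scaling lemma: since the entropy Jacobian $\matrix{H}$ of \eqref{entropyJacobian} is a right symmetrizer for $\doublehat{\matrix{B}}$ (a fact that follows, just as in the $x-$direction, from the Powell correction restoring symmetrizability, see Dellar \cite{dellar2002}), there exists a block-diagonal scaling $\matrix{T}_y$ such that the Merriam identity \eqref{MerriamIdentityY} holds. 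Substituting $\jump{\vec{u}} \simeq \matrix{H}\jump{\vec{q}}$ and $\doublehat{\matrix{L}}_y \matrix{H} = \matrix{S}_y \doublehat{\matrix{R}}_y^T$ then collapses the dissipation to $-\tfrac12\jump{\vec{q}}^T \doublehat{\matrix{R}}_y|\doublehat{\boldsymbol\Lambda}|\matrix{S}_y\doublehat{\matrix{R}}_y^T\jump{\vec{q}}$, a negative-definite quadratic form, yielding \eqref{minimalDissY}.

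The one non-routine step is verifying that the specific diagonal matrix \eqref{scalingMatrixY} actually realizes the Merriam identity. Here I would compute the column-scaled matrix $\widetilde{\matrix{R}}_y = \doublehat{\matrix{R}}_y\matrix{T}_y$ entry by entry and check that $\widetilde{\matrix{R}}_y\widetilde{\matrix{R}}_y^T$ reproduces $\matrix{H}$ in \eqref{entropyJacobian}. The key observation that keeps this tractable is that the $y-$direction eigenstructure \eqref{rightEVY} is obtained from the $x-$direction one \eqref{rightEV} by swapping the roles of the $(v_1,B_1)$ and $(v_2,B_2)$ rows together with rearrangement of the wave structure, while the entropy Jacobian $\matrix{H}$ is invariant under this swap. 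Consequently the scaling factors transfer directly from \eqref{scalingMatrixInTheorem} with $B_1 \to B_2$, which is precisely the content of \eqref{scalingMatrixY}.

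The main obstacle I anticipate is the bookkeeping in this last verification step: one must confirm that the Alfv\'en, magnetogravity, and entropy-wave columns each contribute the correct cross-terms in $v_iv_j$, $v_iB_j$, and $B_iB_j$ to reproduce $\matrix{H}$, since the eigenvectors mix these quantities nontrivially through the factor $c_g^2/B_2$ in the third column. Apart from this calculation, the argument is a transcription of the $x-$direction proof, and both entropy stability and the negativity of the quadratic form follow exactly as in \eqref{signSatified}.
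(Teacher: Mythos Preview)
Your proposal is correct and follows essentially the same approach as the paper. The paper does not even write out a separate proof for this corollary; it is presented as the direct $y-$direction analogue of Theorem~2, relying on the structural symmetry under the swap $(v_1,B_1)\leftrightarrow(v_2,B_2)$ and the same chain of steps \eqref{dissTerm1}--\eqref{scalingMatrix}, which is precisely the route you outline.
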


\begin{rem} \textit{(Entropy Stable 2 (ES2): $y-$direction)}
If we choose the dissipation matrix to be
\begin{equation}
\matrix{D} = |\lambda_{max}|\matrix{I},
\end{equation}
where $\lambda_{max}$ is the largest eigenvalue of the system from $\widehat{\matrix{B}}$ and $\matrix{I}$ is the identity matrix, then we obtain a local Lax-Friedrichs type interface stabilization
\begin{equation}\label{LFDissY}
\begin{aligned}
\vec{g}^{*,ES2} &= \vec{g}^{*,ec} - \frac{1}{2}|\lambda_{max}|\matrix{H}\jump{\vec{q}}.
\end{aligned}
\end{equation} 
\end{rem}
}}
\bibliographystyle{siam} 
\bibliography{References.bib}

\end{document}